\numberwithin{equation}{section} %re-initialisation du compteur des eqs a chaque section
\definecolor{Dgreen}{rgb}{102,255,102}
\definecolor{Purp}{rgb}{102,0,204}
\def\1{\mathbf{1}}
\def\0{\mathbf{0}}
\def\inter{\mathop{\cap}}
\def\NN{\mathbb{N}}
\def\RR{\mathbb{R}}
\def\XX{\mathbf{X}}
\def\AA{\mathbf{A}}
\def\KK{\mathbf{K}}
\def\SS{\mathbf{S}}
\def\HH{\mathbf{H}}
\def\MM{\boldsymbol{\mathcal{M}}}
\newcommand{\widebar}[1]{\overline{#1}}
\newcommand{\bcal}[1]{\boldsymbol{\mathcal{#1}}}
\newcommand{\bscr}[1]{\boldsymbol{\mathscr{#1}}}
\newcommand{\bfrak}[1]{\boldsymbol{\mathfrak{#1}}}
\def\union{\mathop{\cup}}
\def\inter{\mathop{\cap}}
\def\ds{\displaystyle}
\DeclareMathOperator\aff{aff}
\DeclareMathOperator\conv{conv}
\DeclareMathOperator\ext{ext}
\DeclareMathOperator\rai{rai}
\def\card{\operatorname{card}}
\definecolor{Dgreen}{rgb}{0,0.5,0}
\newtheorem{theorem}{Theorem}[section]
\newtheorem{proposition}[theorem]{Proposition}
\newtheorem{lemma}[theorem]{Lemma}
\newtheorem{corollary}[theorem]{Corollary}
\newtheorem{definition}[theorem]{Definition}
\newtheorem{remark}[theorem]{Remark}
\theoremstyle{definition}
\renewcommand{\theassumption}{\Alph{assumption}} % Affiche Assumption A, B, ...
\newlist{assumptions}{enumerate}{1}
\setlist[assumptions,1]{label=(\theassumption.\arabic*), ref=(\theassumption.\arabic*), leftmargin=*}
\NewDocumentEnvironment{List-perso}{m}
  {
    \begin{enumerate}[label=(#1.\arabic*), ref=(#1.\arabic*), leftmargin=*]
  }
  {
    \end{enumerate}
  }
\NewDocumentEnvironment{myenv}{mm}
  {
    \par\medskip
    \noindent\textbf{#1}\par % Affiche V1 (titre) en gras

    % On définit ici un environnement local pour les puces
    \newlist{vlist}{enumerate}{1}
    \setlist[vlist,1]{label=(#2.\arabic*), ref=(#2.\arabic*), leftmargin=*}
  }
  {
    \par\medskip
  }
\newtcolorbox{textecouleur}[1][]{%
  colback=white,
  colframe=white,
  coltext=#1,
  width=\textwidth,
  left=0mm,
  right=0mm,
  box align=left,     % <-- change "base" ou "center" en "left"
  boxrule=0pt,        % <-- supprime la bordure invisible
  breakable,
  enhanced,
  sharp corners,
  before skip=5pt,
  after skip=5pt
}%%%%%%%%%%%%%%%%%%%%%%%%%%%%%%%%%
\begin{document}
\bibliographystyle{plain}

%\title{On the geometry of occupancy measures in \\ uniformly absorbing Markov decision processes
\title{Absorbing Markov Decision Processes: Geometric Properties and Sufficiency of Finite Mixtures of Deterministic Policies
\footnote{Supported by grant PID2021-122442NB-I00 from the Spanish \textit{Ministerio de Ciencia e Innovaci\'on.}}}

\author{
Fran\c{c}ois Dufour\footnote{
Bordeaux INP; Inria centre at the University of Bordeaux, Team: ASTRAL; IMB, Institut de Math\'ematiques de Bordeaux, Universit\'e de Bordeaux, France. e-mail:  \tt{francois.dufour@math.u-bordeaux.fr}  {(Author for correspondence)}}
\and
Tom\'as Prieto-Rumeau\footnote{Statistics Department, UNED, Madrid, Spain. e-mail: {\tt{tprieto@ccia.uned.es}}}}
\date{ \today }
\maketitle
\begin{abstract}
In this paper we investigate several geometric properties of the set of occupancy measures. 
In particular, we analyse the structure of the faces generated by a given occupancy measure,  together with their relative algebraic interior. We also determine the affine hulls of these 
faces and describe the associated parallel linear subspaces. It is shown that these structures can be fully characterised in terms of the parameters that define the underlying  Markov decision process (MDP).
Moreover, we establish that the class of finite mixtures of deterministic stationary policies constitutes a sufficient class of policies for uniformly 
absorbing MDPs with a measurable state space and multiple criteria.
We also provide a characterisation of the minimal order required for a finite mixture of deterministic stationary policies to represent the performance vector of an arbitrary policy.
\end{abstract}
{\small 
\par\noindent\textbf{Keywords:} 
Markov decision processes; absorbing model; extreme points of occupancy measures, finite mixture of deterministic stationary policies, chattering stationary policies, sufficiency of families of policies
\par\noindent\textbf{AMS 2020 Subject Classification:} 91A10, 91A15.}

%%%%%%%%%%%%%%%%%%%%%%%%%%%%%%%%%%%%%%%
\section{Introduction}
\label{sec-1}
 %%%%%%%%%%%%%%%%%%%%%%%%%%%%%%%%%%%%%%%
 
In this work, we study a discrete‑time absorbing Markov Decision Process (MDP) $\mathsf{M}$ defined over a measurable state space $\XX$ and a Borel action space $\AA$, equipped with a given initial distribution.
The set of admissible state–action pairs, denoted by $\KK$, satisfies certain technical measurability conditions.
The collection of all admissible policies is denoted by $\mathbf{\Pi}$.
We also focus on several specific subclasses: deterministic stationary policies, denoted by $\mathbb{D}$; chattering stationary policies, denoted by $\mathbb{C}$; and finite mixtures of deterministic stationary policies,
denoted by $\mathbb{F}$.
A stationary chattering policy is a stationary randomized policy that, for each state, randomizes among a finite fixed set of stationary deterministic policies according to a state‑dependent distribution (see Definition \ref{Def-policies} for a precise statement).
We have chosen to use the term of chattering policy which is borrowed from optimal control theory. Readers are referred to the discussion at the top of page 25 in \cite{balder95}
and the references therein for a discussion of this topic, as well as  \cite{feinberg2020} that used it in the context of MDPs.
A finite mixture of deterministic stationary policies corresponds to a policy whose occupancy measure can be written as a convex combination of a fixed finite set of occupancy measures generated by deterministic stationary policies
(see Definition \ref{Def-mixture-policies} for a precise statement). It is, in fact, a subset of chattering stationary policies, that is, $\mathbb{F}\subset \mathbb{C}$.

For a measurable subset $\Delta \subset \XX$, called the absorbing set, a $\Delta$‑absorbing MDP is one in which the state process remains in $\Delta$ indefinitely once entered, producing no additional reward or cost. A common assumption is that the expected hitting time to $\Delta$ is finite under every admissible policy. The notion of a uniformly absorbing MDP, introduced in \cite[Definition~3.6]{piunovskiy19}, intuitively requires that the tail probabilities of the hitting‑time series converge to zero uniformly over all policies. For comprehensive treatments and recent developments on (uniformly) absorbing MDPs, we refer to \cite{altman99,fra-tom2024,fra-tom2025,fra-tom-arxiv2025,piunovskiy19,feinberg12,Piunovskiy25-book,piunovskiy24,piunovskiy24b,yi24} and the references therein.

The occupancy measure $\mu_{\pi}$ induced by a policy $\pi\in\mathbf{\Pi}$ captures the expected cumulative behavior of the state‑action process over time. More precisely, for any measurable set $\Gamma \subset \XX\times\AA$, $\mu_{\pi}(\Gamma)$ represents the expected total time the process spends in $\Gamma$ under policy $\pi$. We denote by $\bcal{O}$ the set of all occupancy measures.

A bounded one‑step reward vector‑function $r:\XX\times\AA\to\RR^{d}$, with $d\in\NN^{\*}$, is used to assess policy performance. The performance vector of a policy $\pi\in\mathbf{\Pi}$ is then given by
$$
\mathcal{R}(\pi)=\int_{\XX\times\AA} r(x,a)\,\mu_{\pi}(dx,da).
$$
A subset $\varLambda\subset\mathbf{\Pi}$ is said to form a sufficient class of policies if it satisfies $\mathcal{R}(\varLambda)=\mathcal{R}(\mathbf{\Pi})$.

\bigskip

Our first objective is to study some geometric properties of the set of occupancy measures. In particular, we analyse the structure of the faces generated by $\mu\in\bcal{O}$, as well as their relative algebraic interior. 
We characterise the affine hulls of these faces, together with their associated parallel linear subspaces.
%A face generated by $\mu\in\bcal{O}$ is defined as the smallest (with respect to set inclusion) face that contains $\mu$.
The relative algebraic interior of a face generated by a point of convex set has been studied in detail in \cite{weis25,weis21}. In particular, the author has in \cite{weis25} obtained several useful result for faces and their relative algebraic interior and has applied them to spaces of probability mesures (see sections 9, 10 and 11 in \cite{weis25}). One may also mention \cite[page~239]{dubins62} for an analysis of the faces generated by a probability measure defined on a general measurable space.
Our results partially build on the work in \cite{weis25}, but also on our recent results in \cite[Sections~3 and~4]{fra-tom2024} where the set of occupancy measures for an absorbing MDP has been completely described in terms of the characteristic equation and an additional condition stating that an occupancy measure must be absolutely continuous with respect to a reference probability measure.
One of the key contributions is to show that a face, its affine hull and its associated parallel subspace can be fully characterised in terms of the parameters defining the Markov‑controlled model $\mathsf{M}$. More precisely, they can be described in terms of $Q\mathbb{I}_{\Delta^{c}}$‑invariant signed measures $\nu$ defined on $\XX\times\AA$ that are equivalent to $\mu$ and for which the Radon–Nikodym derivatives $\frac{d\nu}{d\mu}$ and $\frac{d\mu}{d\nu}$ are bounded (see Proposition \ref{Elementary-face} and Corollary \ref{aff-rai-elementary-face}).

\bigskip

The second aim of this work is to investigate, within the class of uniformly absorbing models, whether the family of finite mixtures of deterministic stationary policies is sufficient. Mixtures of policies have been studied in \cite{feinberg96} in a general context. A policy $\pi$ is called a mixture of policies from $\varGamma\subset\mathbf{\Pi}$ if there exists a probability measure $\nu$ on the set of strategic measures generated by $\varGamma$ such that the strategic measure induced by $\pi$ can be expressed as the barycentre of $\nu$. In \cite{feinberg96}, it was shown that any (respectively Markov) policy is a mixture of deterministic (respectively Markov) policies. The mixture is called finite if, in the preceding definition, the measure $\nu$ can be written as a convex combination of Dirac measures. Finite mixtures of deterministic stationary policies have been investigated in \cite{feinberg12} and also in \cite{piunovskiy97}. In \cite{feinberg12}, it was shown for constrained uniformly absorbing MDPs with Borel state space and compact action sets that a stationary optimal policy exists and can be taken as a finite mixture of deterministic stationary policies. A procedure to compute such an optimal policy as a mixture of finitely many deterministic stationary policies was also proposed. Related results can be found in \cite{altman96} for constrained MDPs with countable state space. In \cite{piunovskiy97}, the family of finite mixtures of deterministic stationary policies was studied in detail; the questions of denseness and optimality were addressed under suitable continuity–compactness conditions.
A substantial body of work is devoted to the study of mixtures of deterministic stationary policies in MDPs. For a thorough and detailed overview of this literature, we refer the reader to the following references \cite{feinberg96,feinberg12,piunovskiy97}.

We will show that for a uniformly absorbing model $\mathsf{M}$, one can construct, for any policy $\pi\in\mathbf{\Pi}$, a finite mixture $\gamma$ of deterministic stationary policies of order $d+1$ that attains the same performance vector, namely,
$$
\int_{\XX\times\AA} r(x,a)\,\mu_{\pi}(dx,da)=\int_{\XX\times\AA} r(x,a)\,\mu_{\gamma}(dx,da),
$$
or, equivalently, $\mathcal{R}(\mathbb{F}_{d+1})=\mathcal{R}(\mathbf{\Pi})$, as stated in Theorem~\ref{Main-theorem}. To the best of our knowledge, this assertion is novel even when the reward function is real‑valued. We also provide a characterisation of the minimal order required for a finite mixture of deterministic stationary policies to represent the performance vector of an arbitrary policy $\pi\in\mathbf{\Pi}$;
see Proposition~\ref{Optimal-order}.

\bigskip

The rest of the paper is organized as follows. In the remaining of this section we introduce some notation.
In Section \ref{Sec-2}, we introduce the control model under consideration and provide some basic definitions as well as basic properties.
Section \ref{Sec-3} is devoted to the analysis of the geometric properties of the set of occupancy measures and in particular to the characterization of its faces.
Under a set of appropriate assumption, we will establish in section \ref{Sec-4} the sufficiency of the finite mixtures of deterministic stationary policies.

\paragraph{Notation and terminology}
$\NN$ is the set of natural numbers including $0$, $\NN^{*}=\NN-\{0\}$, $\RR$ denotes the set of real numbers, $\RR_{+}$ the set of non-negative real numbers,
$\RR_{+}^{*}=\RR_{+}-\{0\}$, $\widebar{\RR}_{+}=\RR_{+}\union \{+\infty\}$ and $\widebar{\RR}_{+}^*=\RR_{+}^*\union \{+\infty\}$.
For any $q\in \NN$, $\NN_{q}$ is the set $\{0,1,\ldots,q\}$ and for any $q\in \NN^{*}$, $\NN_{q}^{*}$ is the set $\{1,\ldots,q\}$.
We write $\bscr{S}_{p}=\big\{(\beta_{1},\ldots,\beta_{p})\in\RR_{+}^p : \sum_{i=1}^p \beta_{i}=1 \big\}$ for the standard simplex in $\RR^p$
and $0_p$ for the zero vector in $\mathbb{R}^p$ for $p \in \mathbb{N}^*$. If $\{u_{n}\}_{n\in\NN}$ is a sequence of real numbers, we will use the following convention 
$\sum_{n=0}^{-1} u_{n}=0$ to simplify the writing of certain formulae.

On a measurable space $(\mathbf{\Omega},\bcal{F})$ we will consider the set of finite signed measures $\bcal{M}(\mathbf{\Omega})$, the set $\bcal{M}^+(\mathbf{\Omega})$
of finite nonnegative measures, 
and the set of probability measures~$\bcal{P}(\mathbf{\Omega})$.
An element of $\bcal{M}^+(\mathbf{\Omega})$ will be simply called a measure.
Let $(\mathbf{\Omega},\bcal{F},\mu)$ be a measure space. A set $\Gamma\subset\mathbf{\Omega}$ is called a $\mu$-null set if there exists $\Lambda\in\bcal{F}$ such that
$\Gamma\subset\Lambda$ and $\mu(\Lambda)=0$.
For a set $\Gamma\in\bcal{F}$, we denote by $\mathbf{I}_{\Gamma}:\mathbf{\Omega}\rightarrow\{0,1\}$ the indicator function of the set~$\Gamma$, that is,
$\mathbf{I}_{\Gamma}(\omega)=1$ if and only if $\omega\in\Gamma$.
For $\omega\in\mathbf{\Omega}$, we write $\delta_{\{\omega\}}$ for the Dirac probability measure at $\omega$ defined on $(\mathbf{\Omega},\bcal{F})$ by
$\delta_{\{\omega\}}(B)=\mathbf{I}_{B}(\omega)$ for any $B\in\bcal{F}$.

Let $(\mathbf{\Omega},\bcal{F})$ and $(\widetilde{\mathbf{\Omega}},\widetilde{\bcal{F}})$ be two measurable spaces.
A kernel on $\widetilde{\mathbf{\Omega}}$ given $\mathbf{\Omega}$ is a mapping
\mbox{$Q:\mathbf{\Omega}\times\widetilde{\bcal{F}}\rightarrow\RR^+$} such that $\omega\mapsto Q(B|\omega)$ is 
measurable on $(\mathbf{\Omega},\bcal{F})$ for every set $B$ in $\widetilde{\bcal{F}}$,   and  $B\mapsto Q(B|\omega)$ is in $\bcal{M}^+(\widetilde{\mathbf{\Omega}})$
for every $\omega\in\mathbf{\Omega}$. Depending on the context, and in particular when it is important to specify the $\sigma$-algebra associated with $\mathbf{\Omega}$, we will say that $Q$ is a kernel 
on $\widetilde{\mathbf{\Omega}}$ given $(\mathbf{\Omega},\bcal{F})$. Moreover, if $Q$ is a kernel on $\mathbf{\Omega}$ given $\mathbf{\Omega}$, we will simply say that
$Q$ is a kernel on $\mathbf{\Omega}$.
If $Q(\widetilde{\mathbf{\Omega}}|\omega)=1$ for all $\omega\in\mathbf{\Omega}$ then we say that $Q$ is a \textit{stochastic} kernel.
We now introduce two types of specific kernels we will work with.
\begin{enumerate}
\item A kernel $\gamma$ on $\widetilde{\mathbf{\Omega}}$ given $\mathbf{\Omega}$ is called \textit{finitely supported}  if there exist an integer $p\in\NN^{*}$ and $p$ measurable functions
$\{\phi_{i}\}_{i\in\NN_{p}^{*}}$ defined from $\mathbf{\Omega}$ to $\widetilde{\mathbf{\Omega}}$ and a measurable function $\beta$ defined 
from $\mathbf{\Omega}$ to $\bcal{S}_{p}$ satisfying
$$ \gamma(d\widetilde{\omega} | \omega) =\sum_{i=1}^{p} \beta_{i}(\omega) \delta_{\phi_{i}(\omega)}(d\widetilde{\omega})$$
for any $\omega\in\mathbf{\Omega}$ where $\beta(\omega)=(\beta_{1}(\omega),\ldots,\beta_{p}(\omega))$.
If it is needed to specify the number $p$ in the decomposition of $\gamma$, we will say that $\gamma$ is \textit{finitely supported of order $p$}.
This terminology is borrowed from the theory of Young measures, see Definition 8.1 in \cite{balder95}.
\item For $\Gamma\in\bcal{F}$, we write $\mathbb{I}_{\Gamma}$ for the kernel on $\mathbf{\Omega}$ defined by
$\mathbb{I}_{\Gamma}(B|\omega)=\mathbf{I}_{\Gamma}(\omega) \delta_{\{\omega\}}(B)$ for $\omega\in\mathbf{\Omega}$ and $B\in\bcal{F}$.
\end{enumerate}
Let $Q$ be a stochastic kernel on $\widetilde{\mathbf{\Omega}}$ given $\mathbf{\Omega}$.
For  a bounded measurable function $f:\widetilde{\mathbf{\Omega}}\rightarrow\RR^p$ or a measurable function $f:\widetilde{\mathbf{\Omega}}\rightarrow\widebar{\RR}^p_{+}$
with $p\in\NN^*$, we will denote by $Qf$ the measurable function defined on $\mathbf{\Omega}$ by
$Qf=(Qf_{1},\ldots,Qf_{i},\ldots,Qf_{p})$ where $\ds Qf_{i}(\omega)=\int_\mathbf{\widetilde{\Omega}} f_{i}(\widetilde{\omega})Q(d\widetilde{\omega}|\omega)$ where $f_{i}$ is the $i$-th component of $f$
for $\omega\in\mathbf{\Omega}$, $i\in\{1,\cdots,p\}$.
In the same spirit, for a measure $\mu\in\bcal{M}^{+}(\mathbf{\Omega})$, we write $\mu(f)$ for the vector $\big(\mu(f_{1}),\ldots,\mu(f_{i}),\ldots,\mu(f_{p})\big)$
where $\ds \mu(f_{i})=\int_{\mathbf{\Omega}} f_{i}(\omega) \mu(d\omega)$.
We also denote by $\mu Q$ the finite measure  on $(\widetilde{\mathbf{\Omega}},\widetilde{\bcal{F}})$ given by
$\ds B\mapsto \mu Q\,(B)= \int_{\mathbf{\Omega}} Q(B|\omega) \mu(d\omega)$ for $B\in\widetilde{\bcal{F}}$.
Let $(\widebar{\mathbf{\Omega}},\widebar{\bcal{F}})$ be a third measurable space and $R$ be a stochastic kernel on $\widebar{\mathbf{\Omega}}$
given $\widetilde{\mathbf{\Omega}}$. Then we will denote by $QR$ the stochastic kernel on $\widebar{\mathbf{\Omega}}$ given $\mathbf{\Omega}$ defined as
$\ds QR(\Gamma |\omega)= \int_{\widetilde{\mathbf{\Omega}}} R(\Gamma | \tilde{\omega}) Q(d\tilde{\omega} | \omega)$
for $\Gamma\in\widebar{\bcal{F}}$ and $\omega\in\bcal{F}$.
The product of the $\sigma$-algebras $\bcal{F}$ and $\widetilde{\bcal{F}}$ is denoted by $\bcal{F}\otimes\widetilde{\bcal{F}}$ and consists of the $\sigma$-algebra
generated by the measurable rectangles, that is, the sets of the form $\Gamma\times\widetilde{\Gamma}$ for $\Gamma\in\bcal{F}$ and
$\widetilde{\Gamma}\in\widetilde{\bcal{F}}$.
We denote by $\mu\otimes Q$ the unique  finite measure on the product space $(\mathbf{\Omega}\times\widetilde{\mathbf{\Omega}},\bcal{F}\otimes\widetilde{\bcal{F}})$ satisfying 
$\ds \mu\otimes Q( \Gamma\times\widetilde{\Gamma})= \int_{\Gamma} Q(\widetilde{\Gamma}|\omega)\mu(d\omega)$ for $\Gamma\in\bcal{F}$ and
$\widetilde{\Gamma}\in\widetilde{\bcal{F}}$.
Given $\mu\in\bcal{M}(\mathbf{\Omega}\times\widetilde{\mathbf{\Omega}})$, 
$\mu^{\mathbf{\Omega}}(\cdot)=\mu(\cdot\times \widetilde{\mathbf{\Omega}})\in\bcal{M}(\mathbf{\Omega})$ and
$\mu^{\widetilde{\mathbf{\Omega}}}(\cdot)=\mu(\mathbf{\Omega}\times\cdot)\in\bcal{M}(\widetilde{\mathbf{\Omega}})$ are the marginal measures.

For a metric space $\SS$, we write $\bfrak{B}(\SS)$ for its Borel $\sigma$-algebra. A metric space will be always endowed with its Borel $\sigma$-algebra.
A subset of a metric space will always be equipped with the induced metric unless explicitly stated otherwise.
For a measurable space $(\mathbf{\Omega},\bcal{F})$ and a metric space $\SS$, $\bcal{L}^{0}_{\SS}(\mathbf{\Omega},\bcal{F})$ stands for the set of measurable functions from $(\mathbf{\Omega},\bcal{F})$
into $(\SS,\bfrak{B}(\SS))$. If there is no ambiguity about the $\sigma$-algebra associated with $\mathbf{\Omega}$, we will write $\bcal{L}^{0}_{\SS}(\mathbf{\Omega})$ to simplify notation.
If $\mathbf{S}$ is a Polish space (a complete and separable metric space), we will consider on $\bcal{M}(\mathbf{\Omega}\times\mathbf{S})$ the  $ws$-topology (weak-strong topology)
which is the coarsest topology for which the mappings $\mu\mapsto \mu(f)$ are continuous for any bounded Carath\'eodory function $f$ defined on $\mathbf{\Omega}\times\mathbf{S}$.

\bigskip

There are several definitions of extreme set in the literature. We use here Definition 7.61 in \cite{aliprantis06}. Observe that this definition is different from that used in \cite{dubins62} and \cite{klee63}.
An extreme subset of a subset $\mathcal{C}$ of a vector space $\mathcal{V}$ is a nonempty subset $\mathcal{E}$ of $\mathcal{C}$ with the property that
if $x$ belongs to $\mathcal{E}$ it cannot be written as a proper convex combination of points of $\mathcal{C}$ outside $\mathcal{E}$.
A point $x$ is an extreme point of $\mathcal{C}$ if the singleton $\{x\}$ is an extreme set.
The set of extreme points of a subset $\mathcal{C}$ of a vector space $\mathcal{V}$ is denoted by $\ext(\mathcal{C})$.
A face of $\mathcal{C}$ is an extreme subset of $\mathcal{C}$ that is itself convex.
A convex set $\mathcal{C}$ in a vector space $\mathcal{V}$ is linearly bounded (linearly closed, respectively) if the intersection of $\mathcal{C}$ with any line in $\mathcal{V}$ is a bounded (closed, respectively) set.
The relative (respectively, boundary) interior of a convex set $\mathcal{C}$ in a topological vector space $\mathcal{V}$ is defined to be its topological (respectively, boundary) interior relative to its affine hull.
We write $\mathrm{aff}\{K\}$ for the affine hull of a convex set $K$ and $\rai[K]$ for the relative algebraic interior of $K$, that is, the set of points $x$ in $K$ such that for every line $g$ in 
$\aff(K)$ containing $x$, the intersection $g\cap K$ includes an open segment containing $x$.

%%%%%%%%%%%%%%%%%%%%%%%%%%%%%%%%%%%%%%%%%%%%%%%%%%%%%%%%%%%%%%
%%%%%%%%%%%%%%%%%%%%%%%%%%%%%%%%%%%%%%%%%%%%%%%%%%%%%%%%%%%%%%
\section{The Markov control model}
\label{Sec-2}
%%%%%%%%%%%%%%%%%%%%%%%%%%%%%%%%%%%%%%%%%%%%%%%%%%%%%%%%%%%%%%
The main goal of this section is to introduce the parameters defining the control model with a brief presentation of the construction of the controlled process.
We will present different types of properties for a control model, such as being uniformly absorbing. 
We will also introduce the notion of occupancy measure, which has been studied in detail in 
\cite{fra-tom2024,fra-tom2025,fra-tom-arxiv2025,piunovskiy24} for absorbing models and
which will play a fundamental role in the proof techniques we will use in this paper.

\subsection{The model.}
\label{Description-model}
A control model  $\mathsf{M}$ consists of the following parameters.
\begin{itemize}
\item A state space $\XX$ endowed with a $\sigma$-algebra $\bfrak{X}$. The set $\XX$ will always be endowed with the $\sigma$-algebra $\bfrak{X}$ unless explicitly stated, in which case the context will be specified.
\item A Borel space $\mathbf{A}$, representing the action space.
\item A family of nonempty measurable sets $\AA(x)\subseteq \AA$ for $x\in\XX$. The set $\AA(x)$ gives the admissible actions in state~$x$.
Let $\KK=\{(x,a)\in\XX\times\AA: a\in \AA(x)\}$ be the family of feasible state-action pairs.
\item A stochastic kernel $Q$ on $\mathbf{X}$ given $\big(\XX\times\AA,\bfrak{X}\otimes\bfrak{B}(\AA)\big)$ which stands for the transition probability function. 
\item An initial distribution given by $\eta\in\bcal{P}(\XX)$.
\item A bounded reward vector-function $r\in\bcal{L}^{0}_{\RR^{d}}(\XX\times\AA)$ for $d\in\NN^*$ that will be used to evaluate the performance of a control policy.
\end{itemize}
We often write a model $\mathsf{M}$ as $\mathsf{M}=(\mathbf{X},\mathbf{A},\{\mathbf{A}(x)\}_{x \in \mathbf{X}},Q,\eta,r)$ to specify the notations used for the parameters of this model.

\begin{definition}
We will say that a model $\mathsf{M}=(\mathbf{X},\mathbf{A},\{\mathbf{A}(x)\}_{x\in \mathbf{X}},Q,\eta,r)$ satisfies the \textit{measurability conditions} if the following properties holds.
\begin{List-perso}{M}
\item \label{K-measurability} $\KK$ is a measurable subset of $\big(\XX\times\AA,\bfrak{X}\otimes\bfrak{B}(\AA)\big)$.
\item \label{selector-theta} There exists a function $\theta\in\bcal{L}^{0}_{\AA}(\XX,\bfrak{X})$ whose graph is a subset of $\KK$.
\end{List-perso}
\end{definition}

\bigskip

Let us denote by $\mathbb{D}$ the set of functions $\phi\in \bcal{L}^{0}_{\AA}(\XX,\bfrak{X})$ satisfying $\phi(x)\in\AA(x)$ for any $x\in \XX$ and
by $\mathbb{S}$ (respectively, $\mathbb{C}$) the set of (respectively, finitely supported) stochastic kernels $\varphi$ on $\AA$ given $\XX$ satisfying $\varphi(\AA(x)|x)=1$
for any $x\in \XX$.
We will write $\mathbb{C}_{p}$ for the set of finitely supported kernels in $\mathbb{C}$ of order $p\in\NN^{*}$.

\bigskip

Let us consider now an arbitrary but fixed model $\mathsf{M}=(\mathbf{X},\mathbf{A},\{\mathbf{A}(x)\}_{x\in \mathbf{X}},Q,\eta,r)$ satisfying the measurability conditions.
The space of admissible histories of the controlled process up to time $n\in\NN$ is denoted by $\HH_{n}$. It is defined recursively by 
$\HH_0=\XX\quad\hbox{and}\quad
 \HH_n=\HH_{n-1}\times\AA\times\XX$ for $n\ge1$, all endowed with their corresponding product $\sigma$-algebras.
 A control policy $\pi$ is a sequence $\{\pi_n\}_{n\ge0}$ of stochastic kernels on $\AA$ given $\HH_n$, denoted by $\pi_n(da|h_n)$, such that
$\pi_n(\AA(x_n)|h_n)=1$ for each $n\ge0$ and $h_n=(x_0,a_0,\ldots,x_n)\in\HH_n$. 
The set of all policies is denoted by $\mathbf{\Pi}$. 
\begin{definition}
\label{Def-policies}
We now present several specific families of policies that will be used in the remainder of this work.
\begin{itemize}
\item
A policy $\pi=\{\pi_{n}\}_{n\in\NN}$ is called a \emph{randomized stationary policy} if there exists $\varphi\in\mathbb{S}$ satisfying
$\pi_{n}(\cdot |h_{n})=\varphi(\cdot |x_{n})$ for any $h_{n}=(x_{0},a_{0},\ldots,x_{n})\in \mathbf{H}_{n}$ and $n\in \NN$.
By a slight abuse of notation, we will write $\mathbb{S}$ for the set of all randomized stationary policies.
\item
A policy $\pi=\{\pi_{n}\}_{n\in\NN}$ is called a \emph{chattering stationary policy} if
there exists a finitely supported kernel $\varphi\in\mathbb{C}$ satisfying
$\pi_{n}(\cdot |h_{n})=\varphi(\cdot |x_{n})$ for $h_{n}=(x_{0},a_{0},\ldots,x_{n})\in \mathbf{H}_{n}$ and $n\in \NN$.
A chattering stationary policy defined by $\varphi\in\mathbb{C}$ will be said of order $p\in\NN^{*}$ if $\varphi$ is of order $p$.
By a slight abuse of notation, we will write $\mathbb{C}_{p}$ for the set of all chattering stationary policies of order $p$ for some $p\in\NN^{*}$.
\item
A policy $\pi=\{\pi_{n}\}_{n\in\NN}\in \mathbf{\Pi}$ is called a \emph{deterministic stationary policy} if there exists $\phi\in\mathbb{D}$ satisfying
$\pi_{n}(\cdot |h_{n})=\delta_{\phi(x_{n})}(\cdot)$ for any $h_{n}=(x_{0},a_{0},\ldots,x_{n})\in \mathbf{H}_{n}$ and $n\in \NN$.
We will identify the set of deterministic stationary policies with the set $\mathbb{D}$. We will therefore use $\mathbb{D}$ to designate these two sets.
\end{itemize}
\end{definition}

The canonical space of all possible sample paths of the state-action process is 
$\mathbf{\Omega}=(\XX\times\AA)^{\infty}$ endowed with the product $\sigma$-algebra $\bcal{F}$. 
The coordinate projection functions from $\mathbf{\Omega}$ to the state space $\XX$, the action space $\AA$, and $\HH_n$ for $n\ge0$ are respectively denoted   by $X_{n}$, $A_{n}$, and $H_n$. 
We will refer  to $\{X_{n}\}_{n\in \NN}$ as the state process and $\{A_{n}\}_{n\in \NN}$ as the action  process.
It is a well known result that for every policy $\pi \in \mathbf{\Pi}$ 
there exists a unique probability
measure $\mathbb{P}_{\pi}$ on $(\mathbf{\Omega},\bcal{F})$
such that $ \mathbb{P}_{\pi} (\KK^{\infty})=1$ and such that for every $n\in\NN$, $\Gamma\in \bfrak{X}$, and  $ \Lambda\in \bfrak{B}(\mathbf{A})$ we have 
$\mathbb{P}_{\pi}(X_{0}\in \Gamma)=\eta(\Gamma)$, 
$$\mathbb{P}_{\pi}(X_{n+1}\in \Gamma\mid H_{n},A_{n}) = Q(\Gamma\mid X_{n},A_{n}) \quad\hbox{and}\quad
\mathbb{P}_{\pi}(A_{n}\in \Lambda\mid H_{n})= \pi_{n}(\Lambda\mid H_{n})$$
with
$\mathbb{P}_{\pi}$-probability one.
We will refer to $\mathbb{P}_\pi$ as to a \emph{strategic probability measure}, and we will denote by $\bcal{S}=\{\mathbb{P}_{\pi}\}_{\pi\in\mathbf{\Pi}}\subseteq\bcal{P}(\mathbf{\Omega})$
the set of all strategic probability measures. 
The expectation with respect to  $\mathbb{P}_{\pi}$ is denoted by $\mathbb{E}_{\pi}$. For a $p$-dimensional random variable $Z=(Z_{1},\ldots,Z_{i},\ldots,Z_{p})$ with $p\in\NN^{*}$,
we will write $\mathbb{E}_{\pi}[Z]$ for the vector $\big(\mathbb{E}_{\pi}[Z_{1}],\ldots,\mathbb{E}_{\pi}[Z_{i}],\ldots,\mathbb{E}_{\pi}[Z_{p}]\big)$ when $\mathbb{E}_{\pi}[Z_{i}]$ is well defined for any $i\in\{1,\ldots,p\}$.

\subsection{Occupancy measures and absorbing models}
In this section, we consider an arbitrary but fixed model $\mathsf{M}=(\mathbf{X},\mathbf{A},\{\mathbf{A}(x)\}_{x\in \mathbf{X}},Q,\eta,r)$ satisfying the measurability conditions.
We are going to introduce  absorbing  models. To do this, we will need to define the notion of occupancy measures associated with a control policy.

\begin{definition}
\label{absorbing}
We say that the control model  $\mathsf{M}$ is $\Delta$-absorbing if $\Delta\in\bfrak{X}$ and the following conditions are satisfied
\begin{enumerate}
\item[$\bullet$] $Q(\Delta|x,a)=1$ and $r(x,a)=0_{d}$ for $(x,a)\in\Delta\times\AA$.
\item[$\bullet$] $\mathbb{E}_{\pi}[T_\Delta]$ is finite for any $\pi\in\mathbf{\Pi}$.
\end{enumerate}
and we say that  it is uniformly $\Delta$-absorbing if, additionally, the following condition holds
\begin{enumerate}
\item[$\bullet$] $\ds \lim_{n\rightarrow\infty} \sup_{\pi\in\mathbf{\Pi}} \sum_{t=n}^\infty\mathbb{P}_{\pi}\{T_\Delta>t\}=0$.
\end{enumerate}
where $T_\Delta:\mathbf{\Omega}\rightarrow\NN\cup\{\infty\}$ is defined by
$T_\Delta(\omega)=\min\{n\ge0: X_{n}(\omega)\in\Delta\},$ where by convention the $\min$ over the empty set is defined as $+\infty$.
\end{definition}
The notion of a uniformly $\Delta$-absorbing MDP was introduced in \cite[Definition 3.6]{piunovskiy19}. 

\bigskip

We define now the occupancy measures of a $\Delta$-absorbing control model $\mathsf{M}$. 
\begin{definition}\label{def-occupancy-measure}
For a $\Delta$-absorbing control model $\mathsf{M}$, the  occupancy measure $\mu_{\pi}\in\MM^{+}(\XX\times\AA)$ of a control policy $\pi\in\mathbf{\Pi}$ is 
$$\mu_{\pi}(\Gamma) = \mathbb{E}_{\pi}\Big[ \sum_{t=0}^\infty \mathbf{I}_{\{T_\Delta>t\}}\cdot\mathbf{I}_{\{(X_t,A_{t})\in \Gamma\}}\Big] \quad\hbox{for $\Gamma\in\bfrak{X}\otimes\mathfrak{B}(\AA)$.}$$
\end{definition}
\begin{remark}
The measure $\mu_{\pi}$ is a finite measure. Indeed, \mbox{$\mu_{\pi}(\XX\times\AA)= \mathbb{E}_{\pi}[T_{\Delta}]<+\infty$}
since $\mathsf{M}$ is by assumption $\Delta$-absorbing.
\end{remark}
We will write $\bcal{O}=\{\mu_{\pi}:\pi\in\mathbf{\Pi}\}$ for the set of occupancy measures. If $\mathbf{\Lambda}\subseteq\mathbf{\Pi}$ is some subset of policies,  we will use the notation
$\bcal{O}_{\mathbf{\Lambda}}=\{\mu_{\pi}:\pi\in\mathbf{\Lambda}\}$.

\begin{remark}
\label{div-remark}
\begin{enumerate}[label=(\alph*)]
\item Observe that the subset $\bcal{O}$ of the vector space $\MM(\XX\times\AA)$ is convex.
This well known property has been established in the special case of a Borel state space see for example Corollary 4.3 in \cite{feinberg12} and Proposition 3 in \cite{piunovskiy24}.
For the general case of a measurable state space, we can proceed by using similar arguments in showing that the set of strategic measures $ \bcal{S}$ is convex  implying the convexity of $\bcal{O}$.
\item \label{div-remark-b} It is also important to recall the following well-known property: $\bcal{O}=\bcal{O}_{\mathbb{S}},$
see for example Theorem 8.1 in \cite{altman99} for a countable state space, Lemma 4.1 in \cite{feinberg12} or Lemma 2 in \cite{piunovskiy24} for a Borel state space
and Proposition 3.3(ii) in \cite{fra-tom2024} for a general measurable state space.
\end{enumerate}
\end{remark}

\bigskip

\begin{definition}
\label{Def-mixture-policies}
We will denote by $\mathbb{F}$ the set of stationary control policies $\gamma\in\mathbb{S}$ for which there exists some
 nonnegative real numbers $\alpha_1,\ldots,\alpha_n\ge0$ with $\sum\alpha_i=1$ and some deterministic stationary policies $\gamma_1,\ldots,\gamma_n\in\mathbb{D}$ such that 
\begin{align}
\label{sec2-Def-policy-F}
\mu_{\gamma}= \sum_{j=1}^{n} \alpha_{j} \mu_{\gamma_{j}}.
\end{align}
We refer to $\mathbb{F}$ as the set of \emph{finite mixtures of deterministic stationary policies}. Equivalently,
$$\mathbb{F}=\conv\big( \bcal{O}_{\mathbb{D}} \big).$$
For a given integer $n\ge1$, let $\mathbb{F}_n$ be the set of policies that are finite mixtures of $n$ deterministic stationary as in \eqref{sec2-Def-policy-F}.
\end{definition}
The advantage of such a policy is that it can be very easily generated.
It consists in adding an initial randomization step (choosing randomly an integer $\tau$ in $\NN_{n}^*$ according to the distribution 
$\{\alpha_{j}\}_{j\in\NN_{n}^{*}}$) and then select the policy (within the finite family $\{\gamma_{j}\}_{j\in\NN_{n}^{*}}$ of deterministic stationary policies in $\mathbb{D}$)  given by $\gamma_{\tau}\in \mathbb{D}$.

\bigskip

Now, we introduce the so-called characteristic equation. 
\begin{definition} Suppose that the model $\mathsf{M}$ is $\Delta$-absorbing.
A measure $\mu$ in $\MM^{+}(\XX\times\AA)$ is a solution of the characteristic equations when 
$\mu(\KK^c)=0$ and $\mu^\XX=(\eta+\mu Q)\mathbb{I}_{\Delta^c}$.
We denote by $\bcal{C}$ the set of all solutions of the characteristic equations.
\end{definition}

We need to extend the definition of $Q\mathbb{I}_{\Delta^c}$-invariant measure that was introduced in \cite{fra-tom2024} for nonnegative measures $\vartheta\in\bcal{M}^+(\XX\times\AA)$ to the set of finite signed measures.
\begin{definition}
\label{Def-invariant}
A measure $\vartheta\in\bcal{M}(\XX\times\AA)$ is said $Q\mathbb{I}_{\Delta^c}$-invariant when
$$\vartheta^\XX= \vartheta Q\mathbb{I}_{\Delta^c}.$$
\end{definition}

The performance of a policy $\pi\in\mathbf{\Pi}$ is evaluated by using the following vector-function.
\begin{definition}
Suppose that the model $\mathsf{M}$ is $\Delta$-absorbing. The vector of expected rewards for the policy $\pi\in\mathbf{\Pi}$ denoted by $\mathcal{R}(\pi)$ is given by
$$\mathcal{R}(\pi)=\mathbb{E}_{\pi} \Big[ \sum_{t=0}^\infty r(X_t,A_t)\Big].$$
This defines an application from $\mathbf{\Pi}$ to $\RR^{d}$ called the  performance vector function.
A subset $\mathbf{\Lambda}\subset\mathbf{\Pi}$ will be called a sufficient class (family) of policies if $\mathcal{R}(\mathbf{\Lambda})=\mathcal{R}(\mathbf{\Pi})$.
\end{definition}

\begin{remark} As in \cite[Remark 2.14]{fra-tom-arxiv2025},  
there is no loss of generality to assume that the action space $\mathbf{A}$ is compact and so we will make this assumption in the forthcoming. \end{remark}

Throughout the rest of this paper, we will use a fixed model $\mathsf{M}=(\mathbf{X},\mathbf{A},\{\mathbf{A}(x)\}_{x\in \mathbf{X}},Q,\eta,r)$ satisfying the measurability conditions.

%%%%%%%%%%%%%%%%%%%%%%%%%%%%%%%%%%%%%%%%%%%%%%%%%%%%%%%%%%%%%%
%%%%%%%%%%%%%%%%%%%%%%%%%%%%%%%%%%%%%%%%%%%%%%%%%%%%%%%%%%%%%%
\section{Geometric properties of the set of occupancy measures}
\label{Sec-3}
\def\codim{\operatorname{codim}}
\def\im{\operatorname{im}}
%%%%%%%%%%%%%%%%%%%%%%%%%%%%%%%%%%%%%%%%%%%%%%%%%%%%%%%%%%%%%%

In this section, we derive several geometric properties of the set of occupancy measures. 
In particular, we examine:
\begin{itemize}
  \item the faces $\bscr{F}(\mu)$ of $\bcal{O}$ generated by a given $\mu\in\bcal{O}$,
  \item for $\alpha\in\RR^{d}$, the faces $\bscr{F}_{\alpha}(\mu)$ of $\bcal{O}(r,\alpha)$ generated by a given $\mu\in\bcal{O}(r,\alpha)$,
\end{itemize}
where, for $\alpha\in\RR^{d}$,
$$
\bcal{O}(r,\alpha)=\bcal{O}\cap\bcal{H}(r,\alpha),\qquad
\bcal{H}(r,\alpha)=\bigl\{\nu\in\MM(\XX\times\AA)\colon \nu(r)=\alpha\bigr\}.
$$

We also describe the relative algebraic interiors of these faces, we characterize their affine hulls, and we identify the associated parallel linear subspaces.

A central result of this section is that the faces $\bscr{F}(\mu)$ and $\bscr{F}_{\alpha}(\mu)$, their affine hulls
and their associated parallel subspaces
can be described completely in terms of the parameters that define the controlled Markov model $\mathsf{M}$.
For instance, we show that the linear subspace parallel to the affine hull of the face $\bscr{F}(\mu)$ generated by a given $\mu\in\bcal{O}$ consists precisely of the $Q\mathbb{I}_{\Delta^{c}}$‑invariant signed measures
$\nu$ (see Definition~\ref{Def-invariant}) that are equivalent to $\mu$, and whose Radon–Nikodym derivatives $\dfrac{d\nu}{d\mu}$ and $\dfrac{d\mu}{d\nu}$ are bounded.

\bigskip

We will need the following technical result.
\begin{lemma}
\label{Equivalence-occup-measure-2}
Suppose that the Markov controlled model $\mathsf{M}$ is $\Delta$-absorbing.
Consider $\mu, \nu \in\bcal{O}$ and $\varepsilon>0$. Then,
$$\mu + \varepsilon (\mu - \nu) \in \bcal{O} \iff \mu + \varepsilon (\mu - \nu) \in \bcal{M}^{+}(\XX\times\AA).$$
\end{lemma}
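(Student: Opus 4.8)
The plan is to reduce the non-trivial inclusion to the description of $\bcal{O}$ obtained in \cite[Sections~3 and~4]{fra-tom2024}, namely that a measure is an occupancy measure precisely when it solves the characteristic equations and is absolutely continuous with respect to the relevant reference probability measure $\lambda$; I will write this as $\bcal{O}=\{\vartheta\in\bcal{C}:\vartheta\ll\lambda\}$. The implication from left to right is immediate, since every element of $\bcal{O}$ is by definition an occupancy measure and hence a nonnegative finite measure, so it belongs to $\bcal{M}^{+}(\XX\times\AA)$.

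For the converse, put $\sigma=\mu+\varepsilon(\mu-\nu)=(1+\varepsilon)\mu-\varepsilon\nu$ and assume $\sigma\in\bcal{M}^{+}(\XX\times\AA)$. First I would check that $\sigma$ solves the characteristic equations. The feasibility condition is clear from linearity, $\sigma(\KK^{c})=(1+\varepsilon)\mu(\KK^{c})-\varepsilon\nu(\KK^{c})=0$, because $\mu,\nu\in\bcal{O}\subseteq\bcal{C}$. For the marginal identity, the maps $\vartheta\mapsto\vartheta^{\XX}$, $\vartheta\mapsto\vartheta Q$, and the restriction $\vartheta\mapsto\vartheta\mathbb{I}_{\Delta^{c}}$ are all linear, and the crucial point is that the affine coefficients $1+\varepsilon$ and $-\varepsilon$ sum to $1$, so that the two copies of the initial-distribution term collapse into a single one:
$$\sigma^{\XX}=(1+\varepsilon)(\eta+\mu Q)\mathbb{I}_{\Delta^{c}}-\varepsilon(\eta+\nu Q)\mathbb{I}_{\Delta^{c}}=(\eta+\sigma Q)\mathbb{I}_{\Delta^{c}}.$$
Together with the standing assumption $\sigma\geq 0$, this shows $\sigma\in\bcal{C}$.

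It remains to transfer the absolute-continuity condition. Since $\mu$ and $\nu$ are occupancy measures, both are dominated by the reference measure $\lambda$; hence any $\lambda$-null set is both $\mu$-null and $\nu$-null, and therefore $\sigma$-null, giving $\sigma\ll\lambda$. Applying the characterisation $\bcal{O}=\{\vartheta\in\bcal{C}:\vartheta\ll\lambda\}$ then yields $\sigma\in\bcal{O}$. I expect the only subtle point to be the precise form of the domination condition imported from \cite{fra-tom2024}: one must be sure that a single reference measure $\lambda$ dominates every occupancy measure, so that $\mu\ll\lambda$ and $\nu\ll\lambda$ hold for the same $\lambda$ and thus pass to the signed combination $\sigma$. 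The verification of the characteristic equations is otherwise a routine linearity computation whose only structural ingredient is the cancellation of the $\eta$-term afforded by the coefficients summing to one.
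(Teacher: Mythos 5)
Your reduction has the right skeleton, and two of its three steps are sound: the trivial direction is indeed immediate, and your linearity computation showing $\sigma=(1+\varepsilon)\mu-\varepsilon\nu\in\bcal{C}$ (with the $\eta$-term collapsing because the affine coefficients $1+\varepsilon$ and $-\varepsilon$ sum to one) correctly fills in a step the paper's proof leaves implicit. The gap is exactly the point you flagged at the end and then assumed away: in general there is \emph{no} single reference probability measure $\lambda$ with $\bcal{O}=\{\vartheta\in\bcal{C}:\vartheta\ll\lambda\}$; indeed no finite measure on $\XX\times\AA$ can dominate every occupancy measure once $\AA$ is uncountable. Take $\XX=\{x_{0},z\}$, $\Delta=\{z\}$, $\eta=\delta_{\{x_{0}\}}$, $\AA(x)=\AA=[0,1]$ and $Q(\{z\}|x,a)\equiv 1$: the model is uniformly $\Delta$-absorbing and $\bcal{O}=\{\delta_{\{x_{0}\}}\otimes\rho:\rho\in\bcal{P}(\AA)\}$, but a finite $\lambda$ has at most countably many atoms in its $x_{0}$-section and therefore cannot dominate every $\delta_{\{x_{0}\}}\otimes\delta_{\{a\}}$, $a\in[0,1]$. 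The absolute-continuity condition in \cite{fra-tom2024} is not of this universal form: the dominating measure there depends on the candidate measure (it is built from its disintegration), so there is no fixed $\lambda$ serving simultaneously for $\mu$, $\nu$ and $\sigma$, and your final appeal to ``$\sigma\ll\lambda$ hence $\sigma\in\bcal{O}$'' does not go through.

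The repair is short and is precisely the paper's argument: dominate $\sigma$ by an occupancy measure tailored to the pair $(\mu,\nu)$ rather than by a universal reference measure. Since $\sigma\leq(1+\varepsilon)\mu\leq 2(1+\varepsilon)\cdot\frac{1}{2}(\mu+\nu)$, one has $\sigma\ll\frac{1}{2}(\mu+\nu)$, and $\frac{1}{2}(\mu+\nu)\in\bcal{O}$ by convexity of $\bcal{O}$ (Remark~\ref{div-remark}(a)). The policy-dependent form of the characterisation, Lemma~2.9 in \cite{fra-tom-arxiv2025} --- an element of $\bcal{C}$ that is absolutely continuous with respect to \emph{some} occupancy measure is itself an occupancy measure --- then yields $\sigma\in\bcal{O}$. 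With the fictitious universal $\lambda$ replaced by $\frac{1}{2}(\mu+\nu)$ and the citation adjusted accordingly, your proof coincides with the paper's.
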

\begin{proof}
The direct implication is obvious. Let us show the converse implication. 
Assume that $\mu + \varepsilon (\mu - \nu) \in \bcal{M}^{+}(\XX\times\AA)$. Then $\mu + \varepsilon (\mu - \nu) \in \bcal{C}$
and $\mu + \varepsilon (\mu - \nu) \ll \dfrac{1}{2}(\mu + \nu)$. 
According to Remark \ref{div-remark}(a), $\dfrac{1}{2}(\mu + \nu)\in \bcal{O}$ and so, Lemma 2.9 in \cite{fra-tom-arxiv2025} yields $\mu + \varepsilon (\mu - \nu)\in\bcal{O}$ giving the result.
\end{proof}

\begin{proposition}
\label{Elementary-face}
Suppose that the Markov controlled model $\mathsf{M}$ is $\Delta$-absorbing.
For $\mu\in \bcal{O}$, the face of $\bcal{O}$ generated by $\mu$ is given by
$$\bscr{F}(\mu)=\Big\{\nu\in\bcal{C} : \nu\leq c \mu \text{ for some } c >1\Big\}.$$
The relative algebraic interior of $\bscr{F}(\mu)$ is
$$\mathrm{rai}[\bscr{F}(\mu)]=\Big\{\nu\in\bcal{C} : \frac{1}{c} \mu \leq \nu\leq c \mu \text{ for some } c >1\Big\}.$$
The affine hull of $\bscr{F}(\mu)$ is defined by
\begin{align}
\label{Affine-F}
\gamma\in \mathrm{aff}\{\bscr{F}(\mu)\}  \iff
\begin{cases}
\gamma\in\bcal{M}(\XX\times\AA), \\
(\gamma-\mu)^{\XX} = (\gamma-\mu) Q\mathbb{I}_{\Delta^c}, \\
-c \mu \leq \gamma-\mu \leq c \mu, \text{ for some } c\in\RR_{+}^{*}.
\end{cases}
\end{align}
The linear subspace of $\MM(\XX\times\AA)$ parallel to $\mathrm{aff}\{\bscr{F}(\mu)\}$ is given by
\begin{align}
\label{subspace-Affine-F}
\mathscr{V}(\mu)=\big\{\nu\in\MM(\XX\times\AA) : \nu^{\XX} = \nu Q \mathbb{I}_{\Delta^c} \text{ and }-c \mu \leq \nu\leq c \mu, \text{ for some } c\in\RR_{+}^{*} \big\}
\end{align}
\end{proposition}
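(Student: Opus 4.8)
The plan is to derive all four assertions from the algebraic description of the face generated by a point, combined with Lemma~\ref{Equivalence-occup-measure-2}. Recall that for any convex set $C$ and any $\mu\in C$, the smallest face containing $\mu$ has the purely algebraic description
$$\bscr{F}(\mu)=\big\{\nu\in C:\ \mu+\varepsilon(\mu-\nu)\in C\ \text{for some }\varepsilon>0\big\},$$
this being exactly the set of $\nu$ for which $\mu$ lies in the relative interior of a segment of $C$ with endpoint $\nu$. Taking $C=\bcal{O}$ and invoking Lemma~\ref{Equivalence-occup-measure-2}, the condition $\mu+\varepsilon(\mu-\nu)\in\bcal{O}$ reduces to the nonnegativity $(1+\varepsilon)\mu\geq\varepsilon\nu$, i.e. $\nu\leq c\mu$ with $c=(1+\varepsilon)/\varepsilon>1$. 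This gives $\bscr{F}(\mu)=\{\nu\in\bcal{O}:\nu\leq c\mu\text{ for some }c>1\}$. To replace $\bcal{O}$ by $\bcal{C}$, I would note that any $\nu\in\bcal{C}$ with $\nu\leq c\mu$ satisfies $\nu\ll\mu$; since $\mu\in\bcal{O}$, Lemma~2.9 in \cite{fra-tom-arxiv2025} forces $\nu\in\bcal{O}$, while conversely $\bscr{F}(\mu)\subseteq\bcal{O}\subseteq\bcal{C}$. Hence the two descriptions coincide, proving the first formula.

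For the relative algebraic interior I would use the standard fact (compatible with the study of faces and their rai in \cite{weis25,weis21}) that $\nu\in\rai[\bscr{F}(\mu)]$ if and only if $\bscr{F}(\nu)=\bscr{F}(\mu)$, equivalently $\nu\in\bscr{F}(\mu)$ and $\mu\in\bscr{F}(\nu)$. By the formula just established, $\nu\in\bscr{F}(\mu)$ reads $\nu\leq c_1\mu$ and $\mu\in\bscr{F}(\nu)$ reads $\mu\leq c_2\nu$; together, with $c=\max(c_1,c_2)$, they yield the two-sided bound $\tfrac1c\mu\leq\nu\leq c\mu$, and conversely such a bound returns both memberships (using Lemma~2.9 of \cite{fra-tom-arxiv2025} to guarantee $\nu\in\bcal{O}$ so that $\bscr{F}(\nu)$ is a well-defined face). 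Alternatively one can verify $\nu\in\rai[\bscr{F}(\mu)]$ directly from the line definition: given $\rho\in\bscr{F}(\mu)$ one has $\rho\leq c_\rho\mu\leq c_\rho c\,\nu$, whence $\nu+\varepsilon(\nu-\rho)\geq0$ for small $\varepsilon$, and Lemma~\ref{Equivalence-occup-measure-2} places it back in $\bscr{F}(\mu)$.

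For the affine hull and its parallel subspace, I would first observe that $\mathscr{V}(\mu)$ is a linear subspace (the invariance condition is linear and the two-sided bounds are additive) and that, since $\mu\in\bscr{F}(\mu)$, the subspace parallel to $\mathrm{aff}\{\bscr{F}(\mu)\}$ is $\mathrm{aff}\{\bscr{F}(\mu)\}-\mu$; thus \eqref{Affine-F} and \eqref{subspace-Affine-F} are the same statement, and it suffices to prove $\mathrm{aff}\{\bscr{F}(\mu)\}=\mu+\mathscr{V}(\mu)$. For the inclusion $\subseteq$, write $\gamma=\sum_i t_i\nu_i$ with $\nu_i\in\bscr{F}(\mu)$ and $\sum_i t_i=1$; since each $\nu_i$ solves the characteristic equation and $\sum_i t_i=1$, the initial term $\eta\mathbb{I}_{\Delta^c}$ cancels when forming $(\gamma-\mu)^{\XX}$, leaving precisely the invariance identity $(\gamma-\mu)^{\XX}=(\gamma-\mu)Q\mathbb{I}_{\Delta^c}$, while adding the finitely many bounds $0\leq\nu_i\leq c_i\mu$ gives $-c\mu\leq\gamma-\mu\leq c\mu$. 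For the reverse inclusion, given $\nu\in\mathscr{V}(\mu)$ the bound $-c\mu\leq\nu\leq c\mu$ gives $|\nu|\leq c\mu$ in total variation, so $\mu+t\nu\geq(1-tc)\mu\geq0$, $\mu+t\nu\leq(1+tc)\mu$ and $(\mu+t\nu)(\KK^c)=0$ for $t\leq1/c$; the invariance of $\nu$ ensures $\mu+t\nu$ still satisfies the characteristic equation, so $\mu+t\nu\in\bcal{C}$ with $\mu+t\nu\leq(1+tc)\mu$, i.e. $\mu+t\nu\in\bscr{F}(\mu)$. Writing $\mu+\nu=(1-\tfrac1t)\mu+\tfrac1t(\mu+t\nu)$ then exhibits $\mu+\nu$ as an affine combination of two elements of $\bscr{F}(\mu)$, so $\mu+\nu\in\mathrm{aff}\{\bscr{F}(\mu)\}$.

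The routine points — that $-c\mu\leq\nu\leq c\mu$ yields $|\nu|\leq c\mu$, and the bookkeeping of constants across the finitely many summands — I would dispatch quickly. The conceptual crux, and the step I expect to be the main obstacle, is the reverse inclusion for the affine hull: identifying the directions of $\mathscr{V}(\mu)$ as exactly the $Q\mathbb{I}_{\Delta^c}$-invariant, $\mu$-dominated signed measures hinges on the cancellation of $\eta\mathbb{I}_{\Delta^c}$ under affine (rather than merely convex) combinations, and on the observation that invariance together with two-sided domination by $\mu$ is precisely what keeps $\mu+t\nu$ inside $\bcal{C}$ and below a multiple of $\mu$, hence inside the face. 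Everything else reduces to repeated applications of Lemma~\ref{Equivalence-occup-measure-2} and Lemma~2.9 of \cite{fra-tom-arxiv2025}.
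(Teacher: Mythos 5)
Your proposal is correct, and for three of the four assertions it follows the same route as the paper: the first formula is obtained exactly as in the paper from the algebraic characterisation of the face generated by a point (Proposition~4.1 in \cite{weis25}) together with Lemma~\ref{Equivalence-occup-measure-2}, and the passage from $\bcal{O}$ to $\bcal{C}$ via Lemma~2.9 of \cite{fra-tom-arxiv2025}; the relative algebraic interior is handled through the same equivalence $\nu\in\mathrm{rai}[\bscr{F}(\mu)] \iff \nu\in\bscr{F}(\mu)$ and $\mu\in\bscr{F}(\nu)$, which is precisely Corollary~5.7 in \cite{weis25} (your ``standard fact''); and the parallel-subspace statement is, in both treatments, read off from the affine-hull description once one notes $\mathscr{V}(\mu)$ is a subspace and $\mu\in\mathrm{aff}\{\bscr{F}(\mu)\}$. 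Where you genuinely diverge is the affine hull itself: the paper cites Corollary~4.6 in \cite{weis25}, which says $\gamma\in\mathrm{aff}\{\bscr{F}(\mu)\}$ iff $\gamma\in\MM(\XX\times\AA)$ and $\mu\pm\varepsilon(\gamma-\mu)\in\bcal{O}$ for some $\varepsilon>0$, and then translates this via Lemma~2.9 of \cite{fra-tom-arxiv2025}; you instead prove $\mathrm{aff}\{\bscr{F}(\mu)\}=\mu+\mathscr{V}(\mu)$ by hand, with the inclusion $\subseteq$ via cancellation of the $\eta\mathbb{I}_{\Delta^{c}}$ term under affine combinations of solutions of the characteristic equation, and $\supseteq$ by checking that $\mu+t\nu\in\bcal{C}$ with $\mu+t\nu\leq(1+tc)\mu$ for small $t>0$, hence $\mu+t\nu\in\bscr{F}(\mu)$, and then writing $\mu+\nu$ as an affine combination. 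Both arguments are sound; the paper's is shorter and uniform (everything flows from the abstract face calculus of \cite{weis25} plus one MDP-specific lemma), while yours is self-contained and makes the mechanism explicit --- at the cost of the sign bookkeeping for affine combinations with negative coefficients (your ``adding the bounds'' needs $|t_i|$'s, giving $c=\sum_i|t_i|\max(1,c_i)$, say), which you correctly flagged as routine. The only weak spot is your parenthetical ``alternative'' line-based verification of the rai claim, which as sketched only checks extendability past $\nu$ along segments ending in $\bscr{F}(\mu)$ rather than along arbitrary lines of the affine hull; since your primary argument for that item is the one the paper uses, this does not affect correctness.
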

\begin{proof}
From Proposition 4.1 in \cite{weis25}, $\nu\in\bscr{F}(\mu)$ if and only if $\nu\in\bcal{O}$ and $\mu+\varepsilon (\mu-\nu) \in\bcal{O}$ for some
$\varepsilon>0$. 
For $\varepsilon>0$, Lemma  \ref{Equivalence-occup-measure-2} yields
$$\nu\in\bscr{F}(\mu) \iff \nu\in\bcal{O} \text{ and } \nu\leq  \frac{(1+\varepsilon)}{\varepsilon} \mu \iff \nu\in\bcal{C} \text{ and } \nu\leq  \frac{(1+\varepsilon)}{\varepsilon} \mu$$
where the last equivalence holds by virtue of Lemma 2.9 in \cite{fra-tom-arxiv2025}. This gives the first item.

For $\nu\in\bscr{F}(\mu)$, Corollary 5.7 in \cite{weis25} yields $\nu\in \mathrm{rai}[\bscr{F}(\mu)]$ if and only if $\nu\in \bscr{F}(\mu) \text{ and } \mu\in \bscr{F}(\nu)$
and combined with the previous item this gives the second claim.

By Corollary 4.6 in \cite{weis25}, $\gamma\in\mathrm{aff}[\bscr{F}(\mu)]$ if and only if 
$\gamma\in\MM(\XX\times\AA)$ and 
$\mu\pm \varepsilon (\gamma-\mu) \in \bcal{O}$, for some $\varepsilon \in\RR_{+}^{*}$.
Observe that for $\gamma\in\MM(\XX\times\AA)$ and $\varepsilon \in\RR_{+}^{*}$, we have
$\mu\pm \varepsilon (\gamma-\mu) \in \bcal{O}$ if and only if $-\dfrac{1}{\varepsilon} \mu \leq \gamma-\mu \leq \dfrac{1}{\varepsilon} \mu$
and $(\gamma-\mu)^{\XX}=(\gamma-\mu)Q\mathbb{I}_{\Delta^{c}}$, this last equivalence comes from Lemma 2.9 in \cite{fra-tom-arxiv2025}. This shows the claim.

For the proof of the last item, observe that $\mathscr{V}(\mu)$ is clearly a linear subspace of $\MM(\XX\times\AA)$.
Now the result is an immediate consequence of the expression of $\mathrm{aff}\{\bscr{F}(\mu)\}$ given in \eqref{Affine-F}.
Indeed, $$\gamma\in \mathrm{aff}\{\bscr{F}(\mu)\} \iff (\gamma-\mu) \in\mathscr{V}(\mu)$$ for $\mu\in\bcal{O}$ giving the result.
\end{proof}

Here is a corollary of the preceding results that characterizes the extremal structures associated with a face generated by a measure in the constrained‑measure set $\bcal{O}(r,\alpha)$ for $\alpha\in\RR^d$.
\begin{corollary}
\label{aff-rai-elementary-face}
Suppose that the Markov controlled model $\mathsf{M}$ is $\Delta$-absorbing.
Let us consider $\alpha\in\RR$ such that $\bcal{O}(r,\alpha)\neq\emptyset$.
Let us write $\bscr{F}_{\alpha}(\mu)$ the face of the convex set $\bcal{O}(r,\alpha)$ generated by $\mu\in \bcal{O}(r,\alpha)$.
Then, we have
\begin{align*}
\bscr{F}_{\alpha}(\mu)=\Big\{\nu\in\bcal{C} : \nu(r)=\alpha \text{ and } \nu\leq c \mu \text{ for some } c >1\Big\}
\end{align*}
and 
\begin{align*}
\mathrm{rai}[\bscr{F}_{\alpha}(\mu)]=\Big\{\nu\in\bcal{C} : \nu(r)=\alpha \text{ and } \frac{1}{c} \mu \leq \nu\leq c \mu \text{ for some } c >1\Big\}
\end{align*}
and
\begin{align*}
%\label{Affine-F-alpha}
\gamma\in \mathrm{aff}\{\bscr{F}_{\alpha}(\mu)\}  \iff
\begin{cases}
\gamma\in\bcal{M}(\XX\times\AA), \\
\gamma(r)=\alpha, \\
(\gamma-\mu)^{\XX} = (\gamma-\mu) Q\mathbb{I}_{\Delta^c}, \\
-c \mu \leq \gamma-\mu \leq c \mu, \text{ for some } c\in\RR_{+}^{*}.
\end{cases}
\end{align*}
and the linear subspace of $\MM(\XX\times\AA)$ parallel to $\mathrm{aff}\{\bscr{F}_{\alpha}(\mu)\}$ is given by
\begin{align*}
\mathscr{V}_{\alpha}(\mu)=\big\{\nu\in\MM(\XX\times\AA) : \nu(r)=0, \: \nu^{\XX} = \nu Q \mathbb{I}_{\Delta^c} \text{ and }-c \mu \leq \nu\leq c \mu, \text{ for some } c\in\RR_{+}^{*} \big\}.
\end{align*}
\end{corollary}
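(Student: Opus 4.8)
The plan is to obtain all four assertions exactly as in the proof of Proposition~\ref{Elementary-face}, but by applying the same results of \cite{weis25} to the convex set $\bcal{O}(r,\alpha)=\bcal{O}\cap\bcal{H}(r,\alpha)$ in place of $\bcal{O}$; note that $\bcal{O}(r,\alpha)$ is convex, being the intersection of the convex set $\bcal{O}$ (Remark~\ref{div-remark}(a)) with the affine subspace $\bcal{H}(r,\alpha)$. The whole argument rests on one elementary decoupling observation: since $\mu\in\bcal{O}(r,\alpha)$ we have $\mu(r)=\alpha$, and hence for any $\gamma,\nu\in\MM(\XX\times\AA)$ and any $\varepsilon>0$, evaluating $r$ and using $\mu(r)=\alpha$ collapses the two affine-constraint memberships to a single $\varepsilon$-independent linear condition, namely $\mu+\varepsilon(\mu-\nu)\in\bcal{H}(r,\alpha)\iff\nu(r)=\alpha$ and $\mu\pm\varepsilon(\gamma-\mu)\in\bcal{H}(r,\alpha)\iff\gamma(r)=\alpha$.

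For the face, I would start from Proposition 4.1 in \cite{weis25} applied to $\bcal{O}(r,\alpha)$: one has $\nu\in\bscr{F}_{\alpha}(\mu)$ iff $\nu\in\bcal{O}(r,\alpha)$ and $\mu+\varepsilon(\mu-\nu)\in\bcal{O}(r,\alpha)$ for some $\varepsilon>0$. Splitting each membership into its $\bcal{O}$-part and its $\bcal{H}(r,\alpha)$-part, the decoupling observation reduces the constraint contribution to the single equation $\nu(r)=\alpha$, while the remaining conditions are precisely those characterizing $\bscr{F}(\mu)$ in Proposition~\ref{Elementary-face}. This yields the stated description of $\bscr{F}_{\alpha}(\mu)$, and in effect shows $\bscr{F}_{\alpha}(\mu)=\bscr{F}(\mu)\cap\bcal{H}(r,\alpha)$.

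The relative algebraic interior follows the same pattern through Corollary 5.7 in \cite{weis25}: $\nu\in\mathrm{rai}[\bscr{F}_{\alpha}(\mu)]$ iff $\nu\in\bscr{F}_{\alpha}(\mu)$ and $\mu\in\bscr{F}_{\alpha}(\nu)$. Feeding in the two face descriptions, the first gives $\nu\in\bcal{C}$, $\nu(r)=\alpha$ and $\nu\le c\mu$, while the second gives $\mu\le c'\nu$ (the relations $\mu\in\bcal{C}$ and $\mu(r)=\alpha$ holding automatically); combining the two bounds and enlarging the constant to $\max(c,c')$ produces $\frac{1}{c}\mu\le\nu\le c\mu$. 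For the affine hull I would invoke Corollary 4.6 in \cite{weis25} for $\bcal{O}(r,\alpha)$, so that $\gamma\in\mathrm{aff}\{\bscr{F}_{\alpha}(\mu)\}$ iff $\gamma\in\MM(\XX\times\AA)$ and $\mu\pm\varepsilon(\gamma-\mu)\in\bcal{O}(r,\alpha)$ for some $\varepsilon>0$. Splitting membership as before, the $\bcal{H}(r,\alpha)$-part contributes only $\gamma(r)=\alpha$, while the $\bcal{O}$-part transcribes, via Lemma 2.9 in \cite{fra-tom-arxiv2025}, into $(\gamma-\mu)^{\XX}=(\gamma-\mu)Q\mathbb{I}_{\Delta^c}$ together with $-c\mu\le\gamma-\mu\le c\mu$, exactly as in Proposition~\ref{Elementary-face}. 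The parallel subspace is then read off by setting $\nu=\gamma-\mu$: since $\mu(r)=\alpha$, the condition $\gamma(r)=\alpha$ becomes the homogeneous $\nu(r)=0$, the remaining conditions become homogeneous in $\nu$, and one checks directly that $\mathscr{V}_{\alpha}(\mu)$ is a linear subspace with $\mathrm{aff}\{\bscr{F}_{\alpha}(\mu)\}=\mu+\mathscr{V}_{\alpha}(\mu)$.

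The one place demanding care is the affine-hull step, since in general $\mathrm{aff}(A\cap H)$ can be strictly smaller than $\mathrm{aff}(A)\cap H$. I expect no difficulty here precisely because I apply Corollary 4.6 of \cite{weis25} \emph{directly} to $\bcal{O}(r,\alpha)$, rather than intersecting the already-computed $\mathrm{aff}\{\bscr{F}(\mu)\}$ with $\bcal{H}(r,\alpha)$; the decoupling identity then guarantees that the hyperplane enters only through the single equation $\gamma(r)=\alpha$, so no collapse of dimension beyond this one linear constraint can occur. The mechanism underlying this clean intersection is that $\mu$ lies simultaneously in $\mathrm{rai}[\bscr{F}_{\alpha}(\mu)]$ and in $\bcal{H}(r,\alpha)$, so the affine subspace meets the relative algebraic interior of the face.
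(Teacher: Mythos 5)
Your proof is correct, but it takes a genuinely different route from the paper's. The paper disposes of the corollary in two lines: it observes that, $\bcal{H}(r,\alpha)$ being an affine subspace, the face of $\bcal{H}(r,\alpha)$ generated by $\mu$, its relative algebraic interior and its affine hull all coincide with $\bcal{H}(r,\alpha)$ itself, and then invokes Proposition 5.8 of \cite{weis25} --- an intersection theorem asserting that the face (respectively, relative algebraic interior, affine hull) generated by a common point of the intersection of two convex sets is the intersection of the corresponding objects for the two sets --- so that all four formulas are read off from Proposition \ref{Elementary-face} by adjoining the single constraint $\nu(r)=\alpha$ (respectively, $\gamma(r)=\alpha$, or $\nu(r)=0$ for the parallel subspace). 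You instead bypass Proposition 5.8 entirely and rerun the proof of Proposition \ref{Elementary-face} directly on the convex set $\bcal{O}(r,\alpha)$, applying Proposition 4.1, Corollary 5.7 and Corollary 4.6 of \cite{weis25} (all of which hold for an arbitrary convex subset of a vector space, so this is legitimate) together with your decoupling observation that $\mu(r)=\alpha$ turns the $\varepsilon$-dependent memberships in $\bcal{H}(r,\alpha)$ into the $\varepsilon$-free equations $\nu(r)=\alpha$ and $\gamma(r)=\alpha$; the remaining $\bcal{O}$-parts are handled, exactly as in the paper, by Lemma \ref{Equivalence-occup-measure-2} and Lemma 2.9 of \cite{fra-tom-arxiv2025}, both applicable since $\bcal{O}(r,\alpha)\subseteq\bcal{O}$. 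In effect you re-prove by hand precisely the instance of Weis's Proposition 5.8 that the paper cites, and your closing remark --- that the clean intersection works because $\mu$ lies in $\mathrm{rai}[\bscr{F}_{\alpha}(\mu)]$ and in $\bcal{H}(r,\alpha)$, forestalling the usual failure of $\mathrm{aff}(A\cap H)=\mathrm{aff}(A)\cap H$ --- is exactly the mechanism that proposition encapsulates. The paper's route buys brevity; yours buys self-containedness, needing only the pointwise characterizations already used for Proposition \ref{Elementary-face}, at the cost of redoing the bookkeeping.
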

\begin{proof}
Observe that the face of $\bcal{H}(r,\alpha)$ generated by $\mu\in \bcal{H}(r,\alpha)$, its relative algebraic interior and its affine hull are given by $\bcal{H}(r,\alpha)$.
Applying Proposition 5.8 in \cite{weis25}, the results are an easy consequence of Proposition \ref{Elementary-face}.
\end{proof}

%%%%%%%%%%%%%%%%%%%%%%%%%%%%%%%%%%%%%%%%%%%%%%%%%%%%%%%%%%%%%%
%%%%%%%%%%%%%%%%%%%%%%%%%%%%%%%%%%%%%%%%%%%%%%%%%%%%%%%%%%%%%%
\section{Sufficiency of finite mixtures of deterministic stationary policies and minimal order}
\label{Sec-4}
%%%%%%%%%%%%%%%%%%%%%%%%%%%%%%%%%%%%%%%%%%%%%%%%%%%%%%%%%%%%%%
In this section we first establish the sufficiency of the class of finite mixtures of deterministic stationary policies, i.e.  
$$
\mathcal{R}(\mathbb{F})=\mathcal{R}(\mathbf{\Pi}),
$$ 
for a uniformly absorbing model (see Theorem~\ref{D-Sigma-sufficiency}).  Using different arguments, this result is sharpened in Theorem~\ref{Main-theorem} by showing that the subclass $\mathbb{F}_{d+1}$ forms a sufficient family for such models.  Finally, we characterize the minimal order required for a finite mixture of deterministic stationary policies to reproduce the performance vector of an arbitrary policy
$\pi\in\mathbf{\Pi}$ (see Proposition~\ref{Optimal-order}).

\bigskip

We first need to establish the following technical result.
\begin{lemma}
\label{lissage}
Let $\Phi\in \mathbb{C}_{p}$ for $p\in\NN^*$ then there exists $\Phi^{*}\in \mathbb{C}_{p}$ supported on $\{\phi^{*}_{i}(x)\}_{i\in\NN^*_{p}}$ for some
$\{\phi^{*}_{i}\}_{i\in\NN^*_{p}}\subset \mathbb{D}$ satisfying
\begin{enumerate}[label=\arabic*)]
\item $\Phi(\cdot | x)=\Phi^{*}(\cdot | x)$ for any $x\in\XX$,
\item $\Phi^{*}(\{\phi^{*}_{i}(x)\} | x)>0$ for any $x\in\XX$ and $i\in\NN_{p}^{*}$.
\end{enumerate}
\end{lemma}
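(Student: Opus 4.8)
The plan is to start from the defining representation of $\Phi$ as a finitely supported kernel of order $p$. Since $\Phi\in\mathbb{C}_{p}$, there exist measurable maps $\phi_{1},\ldots,\phi_{p}\in\mathbb{D}$ and a measurable weight function $\beta=(\beta_{1},\ldots,\beta_{p})\colon\XX\to\bcal{S}_{p}$ with $\Phi(\cdot\,|\,x)=\sum_{i=1}^{p}\beta_{i}(x)\,\delta_{\phi_{i}(x)}(\cdot)$ for every $x\in\XX$. The only obstruction to property 2) is that at some states $x$ a coordinate $\beta_{i}(x)$ may vanish, so that the listed support point $\phi_{i}(x)$ carries no mass. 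The idea is therefore to leave $\Phi$ untouched as a measure while reassigning every such ``idle'' index to a point that does carry mass.

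Concretely, I would introduce the first active index $j(x)=\min\{i\in\NN_{p}^{*}:\beta_{i}(x)>0\}$, which is well defined because $\beta(x)\in\bcal{S}_{p}$ forces at least one positive coordinate. I then set $\phi_{i}^{*}(x)=\phi_{i}(x)$ on $\{\beta_{i}>0\}$ and $\phi_{i}^{*}(x)=\phi_{j(x)}(x)$ on $\{\beta_{i}=0\}$. For the weights, keeping $\beta_{i}^{*}=\beta_{i}$ already preserves the measure, since the reassigned indices contribute mass $0$; alternatively, to obtain genuinely positive weights one transfers a fixed small amount, e.g. $\beta_{j(x)}(x)/(2p)$, from slot $j(x)$ to each idle slot. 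In either case $\Phi^{*}(\cdot\,|\,x):=\sum_{i=1}^{p}\beta_{i}^{*}(x)\,\delta_{\phi_{i}^{*}(x)}(\cdot)$ equals $\Phi(\cdot\,|\,x)$, which is property 1).

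Property 2) then follows immediately. By construction $\phi_{i}^{*}(x)=\phi_{i}(x)$ when $\beta_{i}(x)>0$ and $\phi_{i}^{*}(x)=\phi_{j(x)}(x)$ when $\beta_{i}(x)=0$; since $\Phi^{*}=\Phi$, in the first case $\Phi^{*}(\{\phi_{i}^{*}(x)\}\,|\,x)\ge\beta_{i}(x)>0$ and in the second $\Phi^{*}(\{\phi_{i}^{*}(x)\}\,|\,x)\ge\beta_{j(x)}(x)>0$. (If one uses the redistributed weights, then $\beta_{i}^{*}(x)>0$ holds directly, which is stronger.) It remains to check that $\Phi^{*}\in\mathbb{C}_{p}$: each $\phi_{i}^{*}$ maps into $\AA(x)$ because both $\phi_{i}(x)$ and $\phi_{j(x)}(x)$ do, hence $\phi_{i}^{*}\in\mathbb{D}$, and $\beta^{*}$ is a $\bcal{S}_{p}$-valued map.

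The one point requiring genuine care, and the main obstacle, is the measurability of the reassignment rather than the elementary algebra. I would handle it by noting that $\{x:j(x)=m\}=\{x:\beta_{m}(x)>0\}\cap\bigcap_{i<m}\{x:\beta_{i}(x)=0\}$ is measurable, and then partitioning $\XX$ into the finitely many measurable cells obtained by intersecting these with $\{\beta_{i}>0\}$ and $\{\beta_{i}=0\}$. On each cell $\phi_{i}^{*}$ coincides with one of the measurable maps $\phi_{i}$ or $\phi_{m}$, and a function that agrees with measurable functions on the pieces of a finite measurable partition is itself measurable; the same partition argument shows that $\beta^{*}$ is measurable. This completes the construction.
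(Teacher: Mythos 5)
Your construction is correct and essentially the same as the paper's: you reassign each zero-weight index to the action of the first active index $j(x)$ (the paper's $\tau(x)$), and your redistribution variant matches the paper's equal split of $\beta_{\tau(x)}(x)$ over $\mathcal{Z}(x)\cup\{\tau(x)\}$ up to the choice of split, while your observation that the weights need not be changed at all (since property 2) concerns the mass of the atom $\phi^{*}_{i}(x)$, not the coefficient $\beta^{*}_{i}(x)$) is also sound. The only real difference is in the measurability step, where the paper invokes measurable-multifunction results from Aliprantis--Border (Corollary 18.8, Lemma 18.7, Theorem 18.19) for $\mathcal{Z}$ and $\tau$, whereas you verify it directly via the finite measurable partition $\{j=m\}=\{\beta_{m}>0\}\cap\bigcap_{i<m}\{\beta_{i}=0\}$ and gluing; this is an equally valid and more elementary route to the same construction.
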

\begin{proof}
There exist $p$ measurable functions $\{\phi_{i}\}_{i\in\NN_{p}^{*}}$ in $\mathbb{D}$ and a measurable function $\beta$ defined from $\XX$ to $\bcal{S}_{p}$ satisfying
$\ds \Phi(\cdot | x) =\sum_{i=1}^{p} \beta_{i}(x) \delta_{\phi_{i}(x)}(\cdot)$
for any $x\in\XX$ where $\beta(x)=(\beta_{1}(x),\ldots,\beta_{p}(x))$.
Let us define the multifunction $\mathcal{Z}$ from $\XX$ into $\NN_{p}^{*}$ by $\mathcal{Z}(x)=\{k\in\NN_{p}^{*} : \beta_{k}(x) = 0\}$ which is measurable by Corollary 18.8 in \cite{aliprantis06}.
Combining Lemma 18.7 and Theorem 18.19 in \cite{aliprantis06}, it follows that the function $\tau$ from $\XX$ into $\NN_{p}^{*}$ defined by $\tau(x)=\min \{j\in\NN_{p}^{*} : \beta_{k}(j) > 0\}$ is measurable.
The kernel $\Phi^{*}(\cdot | x)$ defined by $\ds \sum_{i=1}^{p} \beta^{*}_{i}(x) \delta_{\phi^{*}_{i}(x)}(\cdot)$
where 
\begin{align*}
\beta^{*}_{i}(x) =
\begin{cases}
\frac{\beta_{\tau(x)}(x)}{\card(\mathcal{Z}(x))+1} & \text{ if } i\in \mathcal{Z}(x)\union\{\tau(x)\}, \\
\beta_{i}(x) & \text{ otherwise},
\end{cases}
\quad \text{and} \quad
\phi^{*}_{i}(x) =
\begin{cases}
\phi_{\tau(x)}(x) & \text{ if } i\in \mathcal{Z}(x), \\
\phi_{i}(x) & \text{ if } i\in \NN_{p}^{*}\setminus\mathcal{Z}(x),
\end{cases}
\end{align*}
satisfies the required properties.
\end{proof}

\bigskip

The following result establishes the sufficiency of $\mathbb{F}$.
\begin{theorem}
\label{D-Sigma-sufficiency}
Assume that the model $\mathsf{M}$ is uniformly $\Delta$-absorbing. Then, $\mathbb{F}$ is a sufficient family of policies, that is, $\mathcal{R}(\mathbb{F})=\mathcal{R}(\mathbf{\Pi})$.
\end{theorem}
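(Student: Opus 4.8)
The plan is to transfer the statement to the finite-dimensional setting. Since the performance vector depends on a policy only through its occupancy measure and $\nu \mapsto \nu(r)$ is linear on $\MM(\XX\times\AA)$, the identity $\mathbb{F}=\conv(\bcal{O}_{\mathbb{D}})$ gives $\mathcal{R}(\mathbb{F})=\conv\{\mu_{\phi}(r):\phi\in\mathbb{D}\}$, while $\mathcal{R}(\mathbf{\Pi})=\{\mu(r):\mu\in\bcal{O}\}$; both are convex subsets of $\RR^{d}$. Because $\mathbb{D}\subset\mathbf{\Pi}$ and $\bcal{O}$ is convex (Remark~\ref{div-remark}), the inclusion $\mathcal{R}(\mathbb{F})\subseteq\mathcal{R}(\mathbf{\Pi})$ is immediate, so everything reduces to proving $\{\mu(r):\mu\in\bcal{O}\}\subseteq\conv\{\mu_{\phi}(r):\phi\in\mathbb{D}\}$.

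For the reverse inclusion I would argue by support functions. Fix $\lambda\in\RR^{d}$ and consider the scalar reward $\langle\lambda,r\rangle$: the corresponding single-objective total-reward problem on the absorbing model $\mathsf{M}$ admits deterministic stationary policies as a sufficient class, so $\sup_{\mu\in\bcal{O}}\langle\lambda,\mu(r)\rangle=\sup_{\phi\in\mathbb{D}}\langle\lambda,\mu_{\phi}(r)\rangle$. The right-hand side is exactly the support function $h(\lambda)$ of $\conv\{\mu_{\phi}(r):\phi\in\mathbb{D}\}$. Hence, for every $\alpha=\mu_{\pi}(r)\in\mathcal{R}(\mathbf{\Pi})$ and every $\lambda$ one has $\langle\lambda,\alpha\rangle\le h(\lambda)$, and the support-function characterisation of closed convex sets in $\RR^{d}$ yields $\alpha\in\overline{\conv}\{\mu_{\phi}(r):\phi\in\mathbb{D}\}$, i.e. $\mathcal{R}(\mathbf{\Pi})\subseteq\overline{\mathcal{R}(\mathbb{F})}$.

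It remains to remove the closure, and this is the main obstacle: I must show that $\mathcal{R}(\mathbb{F})=\conv\{\mu_{\phi}(r):\phi\in\mathbb{D}\}$ is already closed. Uniform absorption is used here in two ways. First, $\lim_{n}\sup_{\pi}\sum_{t\ge n}\mathbb{P}_{\pi}\{T_{\Delta}>t\}=0$ forces $\sup_{\pi}\mathbb{E}_{\pi}[T_{\Delta}]=\sup_{\pi}\mu_{\pi}(\XX\times\AA)<\infty$, so that $\{\mu_{\phi}(r):\phi\in\mathbb{D}\}$ is a bounded subset of $\RR^{d}$. Second, the same condition provides $ws$-compactness of the relevant family of occupancy measures; combined with the structural description of $\bcal{O}_{\mathbb{D}}$ (and, where a finite mixture has to be read off explicitly, the smoothing of chattering policies in Lemma~\ref{lissage}), this lets one show that $\{\mu_{\phi}(r):\phi\in\mathbb{D}\}$ is closed, hence compact. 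A compact set in $\RR^{d}$ has compact convex hull (Carath\'eodory), so $\conv\{\mu_{\phi}(r):\phi\in\mathbb{D}\}$ is closed and $\overline{\mathcal{R}(\mathbb{F})}=\mathcal{R}(\mathbb{F})$. Together with the previous paragraph this gives $\mathcal{R}(\mathbf{\Pi})\subseteq\mathcal{R}(\mathbb{F})$ and therefore the desired equality. I expect the closedness step to be the delicate part: plain $\Delta$-absorption does not guarantee the uniform control needed for $ws$-compactness, which is exactly why the hypothesis is strengthened to uniform absorption.
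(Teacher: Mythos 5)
Your strategy breaks down at the step you yourself flag as delicate: the closedness of $\mathcal{R}(\mathbb{F})=\conv\{\mu_{\phi}(r):\phi\in\mathbb{D}\}$. This set is \emph{not} closed in general, even for uniformly absorbing models, so no amount of care can complete the argument. Concretely, take $\XX=\{x_{0}\}\cup\Delta$ with $\Delta=\{x_{\Delta}\}$, $\AA=\AA(x)=[0,1]$, $Q(\{x_{\Delta}\}|x,a)\equiv 1$, $\eta=\delta_{\{x_{0}\}}$, $d=1$, and $r(x_{0},a)=a$ for $a\in(0,1]$, $r(x_{0},0)=\tfrac12$, $r\equiv 0$ on $\Delta\times\AA$. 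This model is uniformly $\Delta$-absorbing ($T_{\Delta}\le 1$), yet $\mathcal{R}(\mathbb{D})=r(x_{0},[0,1])=(0,1]$ and also $\mathcal{R}(\mathbf{\Pi})=(0,1]$: both are convex but not closed, and the theorem holds with both sides non-closed. Your support-function argument, which by construction only recovers closed convex hulls, can therefore yield at best $\mathcal{R}(\mathbf{\Pi})\subseteq\overline{\mathcal{R}(\mathbb{F})}$ (in the example, $[0,1]\supsetneq(0,1]$), which is strictly weaker than the statement. The underlying technical reason your compactness plan fails is that $r$ is only jointly measurable, not a Carath\'eodory function (continuous in $a$), so $\mu\mapsto\mu(r)$ is \emph{not} $ws$-continuous on $\MM(\XX\times\AA)$; hence $ws$-compactness of $\bcal{O}$ transfers neither to $\mathcal{R}(\mathbf{\Pi})$ nor to $\{\mu_{\phi}(r):\phi\in\mathbb{D}\}$ (and $\bcal{O}_{\mathbb{D}}=\ext(\bcal{O})$ need not be $ws$-closed in any case). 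A secondary unjustified step is the scalarization identity $\sup_{\mu\in\bcal{O}}\langle\lambda,\mu(r)\rangle=\sup_{\phi\in\mathbb{D}}\langle\lambda,\mu_{\phi}(r)\rangle$, which you invoke as a known single-objective fact: with a merely measurable state space and merely measurable reward, the classical selection/improvement arguments are unavailable, sufficiency of $\mathbb{D}$ in the paper's sense fails even for $d=1$ (this is exactly why mixtures are needed), and the Bauer-type route founders on the same lack of $ws$-continuity of $\mu\mapsto\mu(r)$.

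The paper's proof is built precisely to sidestep this non-closedness. It first reduces, via Theorem 5.6 of \cite{fra-tom-arxiv2025}, to matching the performance of a chattering policy $\Phi\in\mathbb{C}_{d+1}$; after smoothing the weights with Lemma \ref{lissage} so that all of them are strictly positive, it passes to the restricted submodel $\mathsf{M}^{*}$ with finite action sets $\AA^{*}(x)=\{\phi^{*}_{i}(x)\}$. On $\mathsf{M}^{*}$ the action space is essentially a finite discrete set, so every bounded measurable reward is automatically Carath\'eodory, $\mathscr{R}^{*}:\mu\mapsto\mu(r)$ \emph{is} $ws$-continuous, and $\bcal{O}^{*}$ is $ws$-compact. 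Krein--Milman gives $\bcal{O}^{*}=\overline{\conv(\bcal{O}^{*}_{\mathbb{D}^{*}})}$, and instead of claiming any closedness of $\mathscr{R}^{*}\bigl(\conv(\bcal{O}^{*}_{\mathbb{D}^{*}})\bigr)$, the paper uses that a dense convex subset of a convex set has the same relative interior, together with Theorem 5.5 of \cite{fra-tom-arxiv2025}, which places $\mu_{\Phi^{*}}(r)$ in the relative interior of $\mathcal{R}(\mathbf{\Pi}^{*})$. That relative-interior argument is the idea missing from your proposal: it delivers $\mu_{\Phi}(r)\in\mathscr{R}\bigl(\conv(\bcal{O}_{\mathbb{D}})\bigr)=\mathcal{R}(\mathbb{F})$ exactly where a closure-removal argument cannot exist.
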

\begin{proof}
From Theorem 5.6 in \cite{fra-tom-arxiv2025}, it is sufficient to establish $\mathcal{R}(\mathbb{F})=\mathcal{R}(\mathbf{C}_{d+1})$ to get the result.
Therefore, let us consider an arbitrary policy $\Phi\in \mathbb{C}_{p+1}$. Lemma \ref{lissage} shows the existence of $\Phi^{*}\in \mathbb{C}_{p+1}$ supported on
$\{\phi^{*}_{i}(x)\}_{i\in\NN^*_{p+1}}$ for some $\{\phi^{*}_{i}\}_{i\in\NN^*_{p+1}}\subset \mathbb{D}$ satisfying $\Phi^{*}(\{\phi^{*}_{i}(x)\} | x)>0$ for any $x\in\XX$ and $i\in\NN_{p+1}^{*}$ and 
$\mu_{\Phi}=\mu_{\Phi^{*}}$.
From, the proof of Theorem 5.4 in \cite{fra-tom-arxiv2025}, it follows that the model $\mathsf{M}^{*}=(\mathbf{X},\mathbf{A},\{\mathbf{A}^{*}(x)\}_{x\in \mathbf{X}},Q,\eta,r)$
with $\AA^{*}(x)=\{\phi^{*}_{i}(x) : i\in\NN_{p}^{*} \}$ is precisely one of those that satisfies the conclusions of Theorem 5.4 in \cite{fra-tom-arxiv2025}.
From Theorem 5.5 in \cite{fra-tom-arxiv2025}, $\mu_{\Phi^{*}}(r)$ belongs to the relative interior of $\mathcal{R}(\mathbf{\Pi}^{*})$ where
$\mathbf{\Pi}^{*}\subset\mathbf{\Pi}$ denotes set of all control policies associated with the model $\mathsf{M}^{*}$.
Writing  $\bcal{O}^{*}$ for the set of occupancy measures of $\mathsf{M}^{*}$ we have that
$\bcal{O}^{*}$ is $ws$-compact as well as $\mathcal{R}(\mathbf{\Pi}^{*})$.
The Krein-Milman Theorem yields that 
$$\bcal{O}^{*}=\widebar{\conv\big(\bcal{O}^{*}_{\mathbb{D}^{*}}\big)}.$$
where $\mathbb{D}^{*}$ is the set of deterministic stationary policies for the model $\mathsf{M}^{*}$.
Let $\mathscr{R}^{*}$ be the linear mapping defined on $\bcal{O}^{*}$ by $\mathscr{R}^{*}(\mu)=\mu(r)$. Then, it is continuous and so,
the convex set $\mathscr{R}^{*}\Big(\conv\big(\bcal{O}^{*}_{\mathbb{D}^{*}}\big)\Big)$ is dense in $\mathscr{R}^{*}(\bcal{O}^{*})$. Therefore, they have the same relative interior. This yields that
$$\mu_{\Phi}(r)=\mu_{\Phi^{*}}(r)\in\mathscr{R}^{*}\Big(\conv\big(\bcal{O}^{*}_{\mathbb{D}^{*}}\big)\Big).$$
However, $\bcal{O}^{*}_{\mathbb{D}^{*}}\subset\bcal{O}_{\mathbb{D}}$ and so, $\mu_{\Phi}(r)\in\mathscr{R}\Big(\conv\big(\bcal{O}_{\mathbb{D}}\big)\Big)$. Since
$\bcal{O}_{\mathbb{F}}=\conv\big(\bcal{O}_{\mathbb{D}}\big)$, this shows the claim.
\end{proof}

\bigskip

The next theorem establishes a link between the order of a finite mixture of deterministic stationary policies and the dimension of face in $\bcal{O}$ generated by its occupancy measure.
More precisely, it states that that for an occupancy measure $\mu\in \bcal{O}$ we have $\dim \big( \bscr{F}(\mu)\big)\leq p$ if and only if $\mu\in\bcal{O}_{\mathbb{F}_{p+1}}$.
\begin{theorem}
\label{finite-face-equivalence}
Suppose the model $\mathsf{M}$ is uniformly $\Delta$-absorbing.
\begin{enumerate}[label=\arabic*)]
\item If $\dim \big( \bscr{F}(\mu)\big)\leq p$ for some $\mu\in\bcal{O}$ and $p\in\NN^{*}$, then $\mu\in\bcal{O}_{\mathbb{F}_{p+1}}$.
\item If $\mu\in\bcal{O}_{\mathbb{F}_{p+1}}$ for some $p\in\NN^{*}$, then $\dim \big( \bscr{F}(\mu)\big)\leq p$.
\end{enumerate}
\end{theorem}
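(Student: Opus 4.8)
The plan is to localise everything to the single face $\bscr{F}(\mu)$, whose structure is given explicitly by Proposition~\ref{Elementary-face}, and to turn the statement into a Carathéodory-type count inside the finite-dimensional flat $\aff\{\bscr{F}(\mu)\}$. I will use two standard facts about occupancy measures of a uniformly $\Delta$-absorbing model: that $\bcal{O}$ is convex and $ws$-compact, and that its extreme points are precisely the occupancy measures of deterministic stationary policies, $\ext(\bcal{O})=\bcal{O}_{\mathbb{D}}$. Since $\bscr{F}(\mu)$ is a face of $\bcal{O}$, $\ext(\bscr{F}(\mu))=\bscr{F}(\mu)\cap\ext(\bcal{O})\subseteq\bcal{O}_{\mathbb{D}}$, and the characterisation of $\rai[\bscr{F}(\mu)]$ in Proposition~\ref{Elementary-face} (take $c=2$) shows $\mu\in\rai[\bscr{F}(\mu)]$, which in the finite-dimensional affine hull is the ordinary relative interior. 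Finally, recall from Definition~\ref{Def-mixture-policies} that $\mu\in\bcal{O}_{\mathbb{F}_{p+1}}$ means exactly that $\mu$ is a convex combination of at most $p+1$ elements of $\bcal{O}_{\mathbb{D}}$.

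\textbf{Part 1).} Assuming $\dim(\bscr{F}(\mu))\le p$, I would first pass to the closure $\widebar{\bscr{F}(\mu)}$ inside the finite-dimensional flat $\aff\{\bscr{F}(\mu)\}$: being a closed bounded subset of the $ws$-compact set $\bcal{O}$, it is compact and convex, and a short argument shows that its extreme points are still extreme points of $\bcal{O}$, hence lie in $\bcal{O}_{\mathbb{D}}$. By Minkowski's theorem $\widebar{\bscr{F}(\mu)}=\conv(\ext(\widebar{\bscr{F}(\mu)}))$, so Carathéodory's theorem in dimension $\le p$ expresses $\mu$ as $\sum_{j=1}^{p+1}\alpha_j e_j$ with $e_j\in\bcal{O}_{\mathbb{D}}$, $\alpha_j\ge0$ and $\sum_j\alpha_j=1$. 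This is precisely the assertion $\mu\in\bcal{O}_{\mathbb{F}_{p+1}}$.

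\textbf{Part 2).} Writing $\mu=\sum_{j=1}^{p+1}\alpha_j\mu_{\gamma_j}$ with $\gamma_j\in\mathbb{D}$ and, after discarding null weights, all $\alpha_j>0$, the first step is immediate: $\bscr{F}(\mu)$ is an extreme set and $\mu$ is a proper convex combination of the points $\mu_{\gamma_j}\in\bcal{O}$, so each $\mu_{\gamma_j}$ lies in $\bscr{F}(\mu)$ and $\conv\{\mu_{\gamma_1},\dots,\mu_{\gamma_{p+1}}\}\subseteq\bscr{F}(\mu)$. This only yields the \emph{lower} bound $\dim(\bscr{F}(\mu))\ge\dim\conv\{\mu_{\gamma_1},\dots,\mu_{\gamma_{p+1}}\}$. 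For the required \emph{upper} bound $\dim(\bscr{F}(\mu))\le p$ I would work with the parallel subspace $\mathscr{V}(\mu)$ of Proposition~\ref{Elementary-face}, whose dimension equals $\dim(\bscr{F}(\mu))$: every $\nu\in\mathscr{V}(\mu)$ is of the form $\nu=f\mu$ with $f\in L^{\infty}(\mu)$ and satisfies the invariance identity $\nu^{\XX}=\nu Q\mathbb{I}_{\Delta^{c}}$, and the aim is to show, using the decomposition $\mu=\sum_j\alpha_j\mu_{\gamma_j}$ together with this identity, that the admissible densities $f$ form a space of dimension at most $p$.

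\textbf{Main obstacle.} The decisive difficulty is exactly this upper bound in Part 2). The inclusion $\conv\{\mu_{\gamma_1},\dots,\mu_{\gamma_{p+1}}\}\subseteq\bscr{F}(\mu)$ is easy, but a face may a priori contain deterministic extreme points that do not occur in the chosen representation, and such points would enlarge $\dim(\bscr{F}(\mu))$ beyond $p$; the bound therefore cannot follow from generic convexity and must tie the invariance and absolute-continuity constraints defining $\mathscr{V}(\mu)$ directly to the graphs of $\gamma_1,\dots,\gamma_{p+1}$. I expect this to be the step where the specific structure of absorbing occupancy measures (and possibly a minimality or affine-independence hypothesis on the representation) has to be exploited, and where the argument is most delicate.
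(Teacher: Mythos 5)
Your Part 1) is correct, but it is not the paper's argument: the paper stays purely algebraic where you go through topology. It quotes Proposition 3.2 of \cite{fra-tom-arxiv2025} to the effect that $\bcal{O}$, hence its face $\bscr{F}(\mu)$, is linearly closed and linearly bounded, and then applies result (3) of Klee \cite{klee63} --- an algebraic Minkowski--Carath\'eodory theorem for linearly closed, linearly bounded convex sets of finite dimension --- directly to $\bscr{F}(\mu)$: every point of the at most $p$-dimensional face is a convex combination of at most $p+1$ of its extreme points, and these are extreme in $\bcal{O}$ (face property), hence in $\bcal{O}_{\mathbb{D}}$, using $\ext(\bcal{O})=\bcal{O}_{\mathbb{D}}$ exactly as you do. Your ws-compactness route also works, but the one step you left unproved (``its extreme points are still extreme points of $\bcal{O}$'') deserves a comment: it holds for the simple reason that the closure you form is the face itself. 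A finite-dimensional face of the ws-closed set $\bcal{O}$ is automatically closed: for $x\in\widebar{\bscr{F}(\mu)}\subseteq\bcal{O}$ and $y\in\ri[\bscr{F}(\mu)]$, any $z$ in the open segment $(y,x)$ lies in $\ri[\bscr{F}(\mu)]\subseteq\bscr{F}(\mu)$ and is a proper convex combination of $y,x\in\bcal{O}$, so the face property forces $x\in\bscr{F}(\mu)$. Net comparison: your proof needs ws-compactness of $\bcal{O}$, the paper's only linear closedness and linear boundedness; both need $\ext(\bcal{O})=\bcal{O}_{\mathbb{D}}$.

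For Part 2) your proposal is genuinely incomplete --- you only establish the inclusion $\conv\{\mu_{\gamma_1},\dots,\mu_{\gamma_{p+1}}\}\subseteq\bscr{F}(\mu)$, which bounds the dimension from below --- but you should know that the step you could not carry out is precisely the step the paper does not carry out either: its entire proof of item 2) is the bare assertion $\bscr{F}(\mu)\subset\conv\big(\{\mu_{\gamma_1},\dots,\mu_{\gamma_{p+1}}\}\big)$, given without justification. Your stated obstacle (``a face may contain deterministic extreme points that do not occur in the chosen representation'') is not merely a difficulty; it defeats the claim. Take $\XX=\{x_1,x_2,z\}$, $\Delta=\{z\}$, $\AA=\AA(x_1)=\AA(x_2)=\{a,b\}$, $Q(\{z\}\,|\,\cdot)=1$, and $\eta=\tfrac12\delta_{\{x_1\}}+\tfrac12\delta_{\{x_2\}}$; this model is uniformly $\Delta$-absorbing. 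Every occupancy measure is the law of $(X_0,A_0)$, so $\bcal{O}$ is the two-dimensional square of measures on $\{x_1,x_2\}\times\{a,b\}$ with both state marginals equal to $\tfrac12$, whose four vertices are the occupancy measures of the four deterministic stationary policies. Let $\gamma_1\equiv a$, $\gamma_2\equiv b$ and $\mu=\tfrac12\mu_{\gamma_1}+\tfrac12\mu_{\gamma_2}$, so that $\mu\in\bcal{O}_{\mathbb{F}_2}$, i.e.\ $p=1$. Then $\mu$ puts mass $\tfrac14$ on each of the four atoms, every $\nu\in\bcal{C}$ satisfies $\nu\le 2\mu$, and Proposition \ref{Elementary-face} gives $\bscr{F}(\mu)=\bcal{O}$: the face has dimension $2>p$, and it contains the occupancy measure of the policy choosing $a$ at $x_1$ and $b$ at $x_2$, which is not in $\conv\{\mu_{\gamma_1},\mu_{\gamma_2}\}$. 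Minimality of the representation does not repair this ($\mu$ has minimal order $2$ while $\dim\bscr{F}(\mu)=2$), so your fallback plan of bounding $\dim\mathscr{V}(\mu)$ by $p$ through the admissible densities cannot succeed either: item 2) fails as stated.

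In short: your Part 1) is a sound alternative proof; your Part 2) gap is real, but it coincides with a gap in the paper's own one-line argument, which your analysis correctly isolates. What survives is your easy inclusion together with item 1), namely the one-sided relation that $\mu$ admits a representation of order at most $\dim\big(\bscr{F}(\mu)\big)+1$; note that the defect propagates to the proof of Proposition \ref{Optimal-order}, which invokes item 2).
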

\begin{proof}
Let us show the first item. 
Observe that Proposition 3.2 in \cite{fra-tom-arxiv2025} yields $\bscr{F}(\mu)$ is linearly closed and linearly bounded since $\bscr{F}(\mu)$ is a face of $\bcal{O}$. 
Then $(3)$ of  \cite{klee63} shows that any $\gamma\in \bscr{F}(\mu)$ can be written as a convex combination of at most $p+1$ extreme points of $\bscr{F}(\mu)$ that are also extreme points of 
$\bcal{O}$ since $\bscr{F}(\mu)$ is a face of $\bcal{O}$. Therefore,  $\mu\in \bcal{O}_{\mathbb{F}_{p+1}}$ showing the result.

To prove the second item. Consider $\mu\in \bcal{O}_{\mathbb{F}_{p+1}}$. Then there exists $\{\gamma_{j}\}_{j\in\NN_{p+1}^{*}}\subset \mathbb{D}$ and \mbox{$(\alpha_{j})_{j\in\NN_{p+1}^{*}}\in\bscr{S}_{p+1}$}
such that $\ds \mu= \sum_{j=1}^{p+1} \alpha_{j} \mu_{\gamma_{j}}$. Therefore, $\bscr{F}(\mu)\subset\conv\big( \{\mu_{\gamma_{1}},\ldots,\mu_{\gamma_{p+1}}\}\big)$
implying $\dim \big( \bscr{F}(\mu)\big)\leq p$.
\end{proof}

Now the next theorem refines the result established in Theorem \ref{D-Sigma-sufficiency} by showing that the collection of finite mixtures of deterministic stationary policies of order at most $d+1$ is a sufficient family of policies. An interesting point is that our proof relies on arguments different from those employed in the proof of Theorem~\ref{D-Sigma-sufficiency}.
\begin{theorem}
\label{Main-theorem}
Consider $\alpha\in\mathcal{R}(\mathbf{\Pi})$ and suppose the model $\mathsf{M}$ is uniformly $\Delta$-absorbing.
Then $\mathbb{F}_{d+1}$ is a sufficient family of policies, that is, $\mathcal{R}(\mathbb{F}_{d+1})=\mathcal{R}(\mathbf{\Pi})$.
\end{theorem}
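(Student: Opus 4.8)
The plan is to fix $\alpha\in\mathcal{R}(\mathbf{\Pi})$ and to produce a single occupancy measure $\mu\in\bcal{O}$ with $\mu(r)=\alpha$ whose generated face $\bscr{F}(\mu)$ in $\bcal{O}$ has dimension at most $d$. Theorem~\ref{finite-face-equivalence}(1), applied with $p=d$, then immediately gives $\mu\in\bcal{O}_{\mathbb{F}_{d+1}}$, so that $\alpha=\mu(r)\in\mathcal{R}(\mathbb{F}_{d+1})$. As $\alpha$ is arbitrary and the inclusion $\mathcal{R}(\mathbb{F}_{d+1})\subseteq\mathcal{R}(\mathbf{\Pi})$ is trivial, this yields $\mathcal{R}(\mathbb{F}_{d+1})=\mathcal{R}(\mathbf{\Pi})$.

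To obtain such a $\mu$, I would work inside the constrained slice $\bcal{O}(r,\alpha)=\bcal{O}\cap\bcal{H}(r,\alpha)$, which is nonempty because $\alpha\in\mathcal{R}(\mathbf{\Pi})=r(\bcal{O})$, and I would select an extreme point $\mu$ of $\bcal{O}(r,\alpha)$. For such a $\mu$ the face of $\bcal{O}(r,\alpha)$ generated by $\mu$ is the singleton $\{\mu\}$, so by Corollary~\ref{aff-rai-elementary-face} its parallel subspace $\mathscr{V}_{\alpha}(\mu)$ equals $\{0\}$. Since that corollary identifies $\mathscr{V}_{\alpha}(\mu)=\{\nu\in\mathscr{V}(\mu):\nu(r)=0\}=\mathscr{V}(\mu)\cap\ker r$, the linear map $\nu\mapsto\nu(r)$ is injective on $\mathscr{V}(\mu)$, whence $\dim\mathscr{V}(\mu)\le d$. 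Because $\dim\bscr{F}(\mu)=\dim\aff\{\bscr{F}(\mu)\}=\dim\mathscr{V}(\mu)$ by Proposition~\ref{Elementary-face}, we conclude $\dim\bscr{F}(\mu)\le d$, which is exactly the hypothesis required above.

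The main obstacle is the existence of an extreme point of $\bcal{O}(r,\alpha)$, i.e. showing the slice (nonempty and convex) admits extreme points, for which the natural route is $ws$-compactness plus the Krein–Milman theorem. The delicate point is that $\bcal{H}(r,\alpha)$ need not be $ws$-closed, since $r$ is only assumed measurable whereas the $ws$-topology makes $\nu\mapsto\nu(f)$ continuous only for Carath\'eodory $f$. I would resolve this using the absolute-continuity description of $\bcal{O}$ from \cite{fra-tom2024} (every occupancy measure is dominated by a fixed reference measure with bounded density), which makes $\mu\mapsto\mu(r)$ continuous on $\bcal{O}$ for bounded measurable $r$ and renders $\bcal{O}(r,\alpha)$ closed in the relevant topology; alternatively, as in the proof of Theorem~\ref{D-Sigma-sufficiency}, one may first pass to a finite-action submodel $\mathsf{M}^{*}$, where $r$ is automatically Carath\'eodory and $\bcal{O}^{*}$, $\mathcal{R}(\mathbf{\Pi}^{*})$ are $ws$-compact, and transfer the conclusion back through $\bcal{O}^{*}_{\mathbb{D}^{*}}\subseteq\bcal{O}_{\mathbb{D}}$.

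Finally, as a robustness check and a genuinely different argument, I note that the statement also follows purely finite-dimensionally: Theorem~\ref{D-Sigma-sufficiency} gives $\mathcal{R}(\mathbf{\Pi})=\mathcal{R}(\mathbb{F})=r\big(\conv\bcal{O}_{\mathbb{D}}\big)=\conv\{\mu_{\phi}(r):\phi\in\mathbb{D}\}$ by linearity of $\nu\mapsto\nu(r)$, and Carath\'eodory's theorem in $\RR^{d}$ writes each $\alpha\in\conv\{\mu_{\phi}(r)\}$ as a convex combination of at most $d+1$ of the points $\mu_{\phi_{j}}(r)$, whose associated mixture lies in $\mathbb{F}_{d+1}$. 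I would present the face-based argument as the primary proof, since it fits the geometric machinery developed in Section~\ref{Sec-3}, and keep the Carath\'eodory shortcut as a cross-check.
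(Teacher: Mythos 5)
Your overall strategy is sound, and your primary argument takes a genuinely different route through the paper's own machinery. The paper's proof works entirely through the companion paper \cite{fra-tom-arxiv2025}: Theorem~5.6 there yields $\mu_{\gamma}\in\bcal{O}(r,\alpha)$ with $\gamma\in\mathbb{C}_{d+1}$; Lemma~5.1 there, exploiting the resulting finitely supported structure, gives an extreme point $\mu^{\alpha}$ of $\bcal{O}(r,\alpha)$; and Theorem~3.5 there (a Dubins--Klee splitting theorem for slices of $\bcal{O}$ by the $d$ hyperplanes determined by $r$) writes $\mu^{\alpha}$ as a convex combination of at most $d+1$ extreme points of $\bcal{O}$, which are deterministic since $\ext(\bcal{O})=\bcal{O}_{\mathbb{D}}$. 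You instead extract the dimension bound from Section~\ref{Sec-3}: at an extreme point $\mu$ of the slice, $\bscr{F}_{\alpha}(\mu)=\{\mu\}$, so Corollary~\ref{aff-rai-elementary-face} forces $\mathscr{V}_{\alpha}(\mu)=\mathscr{V}(\mu)\cap\ker\mathscr{R}_{\mu}=\{0\}$, hence $\mathscr{R}_{\mu}$ is injective into $\RR^{d}$ and $\dim\bscr{F}(\mu)=\dim\mathscr{V}(\mu)\le d$ by Proposition~\ref{Elementary-face}; Theorem~\ref{finite-face-equivalence}(1) with $p=d$ then concludes. Since Theorem~\ref{finite-face-equivalence}(1) itself rests on Klee's theorem, both proofs share the same underlying engine, but your organization is self-contained within the paper's face calculus (it is in effect the argument of Proposition~\ref{Optimal-order} run forwards) and correctly isolates the one genuine difficulty, namely extreme-point existence in the slice, which the paper delegates to Lemma~5.1 of \cite{fra-tom-arxiv2025}.

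On that difficulty, your route (a) is wrong and should be discarded. The map $\mu\mapsto\mu(r)$ is \emph{not} $ws$-continuous on $\bcal{O}$ for merely bounded measurable $r$: the $ws$-topology only gives continuity against Carath\'eodory integrands, i.e.\ functions continuous in the action. Concretely, take $\XX=\{x_{0},z\}$, $\Delta=\{z\}$, $\AA=\AA(x)=[0,1]$, $Q(\{z\}|x,a)\equiv 1$, $\eta=\delta_{\{x_{0}\}}$; then $\bcal{O}=\{\delta_{\{x_{0}\}}\otimes\varphi:\varphi\in\bcal{P}(\AA)\}$ and the $ws$-topology reduces to weak convergence in $\varphi$, so with $r(x_{0},\cdot)=\mathbf{I}_{\{a^{\star}\}}$, $r(z,\cdot)=0$ and $\varphi_{n}=\delta_{\{a_{n}\}}$, $a_{n}\to a^{\star}$, $a_{n}\neq a^{\star}$, one gets $\mu_{n}(r)=0\not\to 1=\mu(r)$. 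The parenthetical about \cite{fra-tom2024} also misstates that characterization: the absolute-continuity condition there carries no uniform density bound, and in any case domination by a fixed reference measure would not restore continuity along weakly convergent action marginals. Your route (b) is the correct one and is essentially what the paper's citation of Lemma~5.1 encapsulates; just note one transfer detail you left implicit: the extreme point you obtain is extreme in the sub-model slice $\bcal{O}^{*}(r,\alpha)$, not in $\bcal{O}(r,\alpha)$. This is harmless, because $\mathscr{V}(\mu)$ as defined in \eqref{subspace-Affine-F} makes no reference to $\KK$ --- the constraint $-c\mu\le\nu\le c\mu$ already confines $\nu$ to the sub-model's feasible pairs --- so the bound $\dim\mathscr{V}(\mu)\le d$ computed in $\mathsf{M}^{*}$ is the full-model bound; alternatively, run the Klee decomposition inside $\mathsf{M}^{*}$ and use $\bcal{O}^{*}_{\mathbb{D}^{*}}\subseteq\bcal{O}_{\mathbb{D}}$, which reproduces the paper's proof almost verbatim.

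Finally, your Carath\'eodory ``cross-check'' is in fact a complete and valid proof, and the shortest of the three: by Theorem~\ref{D-Sigma-sufficiency}, $\alpha\in\mathcal{R}(\mathbb{F})=\{\nu(r):\nu\in\conv(\bcal{O}_{\mathbb{D}})\}$; Carath\'eodory in $\RR^{d}$ gives $\alpha=\sum_{j=1}^{d+1}\beta_{j}\mu_{\phi_{j}}(r)$; and $\sum_{j=1}^{d+1}\beta_{j}\mu_{\phi_{j}}\in\bcal{O}=\bcal{O}_{\mathbb{S}}$ by Remark~\ref{div-remark}, so it equals $\mu_{\gamma}$ for a stationary $\gamma$, which belongs to $\mathbb{F}_{d+1}$ by Definition~\ref{Def-mixture-policies}. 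Its only cost is that it routes through Theorem~\ref{D-Sigma-sufficiency}, whereas the paper's proof --- and your face-based one --- is deliberately independent of it, as the authors emphasize.
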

\begin{proof}
From Theorem 5.6 in \cite{fra-tom-arxiv2025}, there exists $\mu_{\gamma}\in\bcal{O}(r,\alpha)$ with
$\gamma\in\mathbb{C}_{d+1}$ such that $\gamma(\cdot|x)$ is supported on $\{\gamma_{i}(x)\}_{i\in\NN^*_{d+1}}$ for some $\{\gamma_{i}\}_{i\in\NN^*_{d+1}}\subset \mathbb{D}$.
Let us introduce
$$\widetilde{\KK}=\big\{(x,a)\in\XX\times\AA : a\in\{\gamma_{i}(x): i\in\NN^*_{d+1}\}\}\big\}$$
and write  
$$\widetilde{\bcal{O}}=\{\mu\in \bcal{O}: \mu(\widetilde{\KK}^{c})=0\}.$$
We have clearly, $\mu_{\gamma}\in\widetilde{\bcal{O}}\inter\bcal{H}(r,\alpha)$ and so, Lemma 5.1 in \cite{fra-tom-arxiv2025} yields the existence of $\mu^{\alpha}\in \ext \big( \bcal{O}(r,\alpha) \big)$
Therefore, $\mu_{\alpha}$ is a convex combination of at most $d+1$ extreme points of $\bcal{O}$ by Theorem 3.5 in \cite{fra-tom-arxiv2025}.
However, $\ext(\bcal{O})=\bcal{O}_{\mathbb{D}}$ implying that $\mu_{\alpha}\in \mathbb{F}_{d+1}$ showing the result.
\end{proof}

For any $\alpha \in \mathcal{R}(\Pi)$, the following result gives a characterization of the minimal order of a finite mixture of deterministic stationary policies $\gamma$
that satisfies $\mu_{\gamma}(r)=\alpha$.
\begin{proposition}
\label{Optimal-order}
Assume that the model $\mathsf{M}$ is uniformly $\Delta$-absorbing.
For $\alpha\in\mathcal{R}(\mathbf{\Pi})$, the minimal order of the finite mixtures of deterministic stationary policies $\gamma\in\mathbb{F}$ satisfying $\mu_{\gamma}(r)=\alpha$ is given by $p^*+1$ with
$$p^*= \min_{\mu\in \bcal{O}(r,\alpha)} \dim\big( \mathscr{V}(\mu) \big)=\min_{\mu\in \bcal{O}(r,\alpha)} \Big\{\dim\big(\ker (\mathscr{R}_{\mu})\big) + \dim\big(\im (\mathscr{R}_{\mu})\big) \Big\} \leq d$$
where $\mathscr{V}(\mu)$ is given in equation \eqref{subspace-Affine-F} and  $\mathscr{R}_{\mu}$ is the linear mapping defined on $\mathscr{V}(\mu)$ by $\mathscr{R}_{\mu}(\nu)=\nu(r)$.
\end{proposition}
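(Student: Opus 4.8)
The plan is to reduce the statement to the dimension characterisation of Theorem~\ref{finite-face-equivalence} and then read off the rank--nullity decomposition. Write $N(\alpha)$ for the minimal order in question, i.e. $N(\alpha)=\min\{n\in\NN^{*}: \text{there is } \gamma\in\mathbb{F}_{n} \text{ with } \mu_{\gamma}(r)=\alpha\}$. The first observation is that, for a fixed occupancy measure $\mu\in\bcal{O}$, Theorem~\ref{finite-face-equivalence} provides the exact equivalence $\dim(\bscr{F}(\mu))\leq p \iff \mu\in\bcal{O}_{\mathbb{F}_{p+1}}$; since $\bcal{O}_{\mathbb{F}_{n}}\subseteq\bcal{O}_{\mathbb{F}_{n+1}}$ (a mixture of $n$ policies is one of $n+1$ by adjoining a zero weight), the least order needed to represent $\mu$ itself as a finite mixture equals $\dim(\bscr{F}(\mu))+1$. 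A policy $\gamma\in\mathbb{F}_{n}$ satisfies $\mu_{\gamma}(r)=\alpha$ precisely when $\mu_{\gamma}\in\bcal{O}_{\mathbb{F}_{n}}\cap\bcal{O}(r,\alpha)$, so $N(\alpha)=\min\{n: \bcal{O}_{\mathbb{F}_{n}}\cap\bcal{O}(r,\alpha)\neq\emptyset\}$.

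Next I would convert this into a minimisation over $\bcal{O}(r,\alpha)$, set $p^{*}:=\min_{\mu\in\bcal{O}(r,\alpha)}\dim(\bscr{F}(\mu))$, and prove a two-sided bound. For the lower bound, any $\gamma\in\mathbb{F}_{n}$ with $\mu_{\gamma}(r)=\alpha$ yields a point $\mu_{\gamma}\in\bcal{O}(r,\alpha)$ with $\dim(\bscr{F}(\mu_{\gamma}))\leq n-1$ by item~2) of Theorem~\ref{finite-face-equivalence}, whence $p^{*}\leq n-1$ and therefore $N(\alpha)\geq p^{*}+1$. For the matching upper bound, I would pick a minimiser $\mu^{*}\in\bcal{O}(r,\alpha)$ with $\dim(\bscr{F}(\mu^{*}))=p^{*}$; item~1) of Theorem~\ref{finite-face-equivalence}, together with the degenerate case $p^{*}=0$ (where $\mu^{*}\in\ext(\bcal{O})=\bcal{O}_{\mathbb{D}}$ is the occupancy measure of a single deterministic policy), gives $\mu^{*}\in\bcal{O}_{\mathbb{F}_{p^{*}+1}}$, i.e. a mixture of order $p^{*}+1$ attaining $\alpha$, so $N(\alpha)\leq p^{*}+1$. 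Combining the two bounds gives $N(\alpha)=p^{*}+1$.

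It then remains to rewrite $p^{*}$ in the two claimed forms and to establish $p^{*}\leq d$. Since the dimension of a convex set equals that of the linear subspace parallel to its affine hull, Proposition~\ref{Elementary-face} gives $\dim(\bscr{F}(\mu))=\dim(\mathscr{V}(\mu))$, so $p^{*}=\min_{\mu\in\bcal{O}(r,\alpha)}\dim(\mathscr{V}(\mu))$. Applying the rank--nullity theorem to the linear map $\mathscr{R}_{\mu}\colon\mathscr{V}(\mu)\to\RR^{d}$, $\nu\mapsto\nu(r)$, immediately yields $\dim(\mathscr{V}(\mu))=\dim(\ker\mathscr{R}_{\mu})+\dim(\im\mathscr{R}_{\mu})$, which is the second expression. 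Finally, Theorem~\ref{Main-theorem} produces a $\gamma\in\mathbb{F}_{d+1}$ with $\mu_{\gamma}(r)=\alpha$, i.e. a point of $\bcal{O}(r,\alpha)$ lying in $\bcal{O}_{\mathbb{F}_{d+1}}$, whose face therefore has dimension at most $d$; this bounds $p^{*}\leq d$.

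The step I expect to be delicate is the attainment of the minimum. A priori some $\mu\in\bcal{O}(r,\alpha)$ could generate an infinite-dimensional face, so $p^{*}$ is defined by a minimisation over a set whose elements carry values in $\NN\cup\{\infty\}$; what rescues the argument is precisely Theorem~\ref{Main-theorem}, which exhibits at least one competitor of face dimension $\leq d$ and thereby shows that $p^{*}$ is a genuine minimum over a nonempty subset of $\{0,1,\dots,d\}$ rather than merely an infimum. Once this is secured, the equivalence between face dimension and minimal mixture order, and the rank--nullity splitting, are direct consequences of the results already established.
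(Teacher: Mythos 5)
Your proposal is correct and follows essentially the same route as the paper's proof: both reduce the minimal order to $p^{*}+1$ via the two directions of Theorem~\ref{finite-face-equivalence}, use Theorem~\ref{Main-theorem} to guarantee a competitor with $\dim\big(\bscr{F}(\mu)\big)\leq d$ (hence attainment of the minimum and $p^{*}\leq d$), identify $\dim\big(\bscr{F}(\mu)\big)=\dim\big(\mathscr{V}(\mu)\big)$ via Proposition~\ref{Elementary-face}, and obtain the second expression by rank--nullity with $\mathscr{V}_{\alpha}(\mu)=\ker(\mathscr{R}_{\mu})$. Your explicit treatment of the degenerate case $p^{*}=0$ (via $\ext(\bcal{O})=\bcal{O}_{\mathbb{D}}$, since Theorem~\ref{finite-face-equivalence} is stated for $p\in\NN^{*}$) and of the a priori possibility of infinite-dimensional faces are small refinements the paper leaves implicit, not departures from its argument.
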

\begin{proof}
Consider $\alpha\in\mathcal{R}(\mathbf{\Pi})$.
Let us define $p^*$ as the minimum of $\dim\big( \mathscr{V}(\mu) \big)$ for $\mu\in\bcal{O}(r,\alpha)$.
Combining the second item of Theorem \ref{finite-face-equivalence} and Theorem \ref{Main-theorem}, we obtain that there exists
$\mu_{\alpha}$ in $\bcal{O}(r,\alpha)$ such that $\dim \big( \bscr{F}(\mu_{\alpha})\big)\leq d$.
 We have $p^{*}\leq d$ since by Proposition \ref{Elementary-face}, we have $\dim \big( \bscr{F}(\mu)\big)=\dim\big( \mathscr{V}(\mu) \big)$ for $\mu\in \bcal{O}(r,\alpha)$.
Let $\mu^{*}\in \bcal{O}(r,\alpha)$ be such that
$\dim\big( \mathscr{V}(\mu^{*}) \big)=p^{*}$. Then, the first item of Theorem \ref{finite-face-equivalence} shows that $\mu^{*}\in\mathbb{F}_{p^{*}+1}$.
Now, if there exists $p<p^{*}$ such that $\mu\in\mathbb{F}_{p+1}$ for some $\mu\in \bcal{O}(r,\alpha)$, then $\dim \big( \bscr{F}(\mu)\big)=\dim\big( \mathscr{V}(\mu) \big)=p<p^{*}$ by the second item of 
Theorem \ref{finite-face-equivalence} yielding a contradiction.
Therefore, $p^{*}$ is the minimal order of a finite mixture of deterministic stationary policies $\gamma$ that satisfies $\mu_{\gamma}(r)=\alpha$.
The last equality is easily obtained by observing that for $\mu\in \bcal{O}(r,\alpha)$ with $\dim\big( \mathscr{V}(\mu) \big)\leq d$ we have 
$$\dim\big(\mathscr{V}(\mu)\big) = \dim\big(\mathscr{V}_{\alpha}(\mu)\big) + \codim_{\mathscr{V}(\mu)} \big(\mathscr{V}_{\alpha}(\mu)\big) = \dim\big(\ker (\mathscr{R}_{\mu})\big) + \dim\big(\im (\mathscr{R}_{\mu})\big)
$$
where $\mathscr{V}_{\alpha}(\mu)$ is the linear subspace of $\mathscr{V}(\mu)$ given in Corollary \ref{aff-rai-elementary-face} showing the result.
\end{proof}
\begin{remark}
This gives an interesting geometric interpretation of the minimal representation of a policy with a given performance
vector as a finite mixture of deterministic policies which is stated in terms of the MDP's parameter. This is related to the minimal dimension of the faces of the policies with such performance vector.
\end{remark}

%%%%%%%%%%%%%%%%%%%%%%%%%%%%%%%%%%%%%%%%%%%%%%%%%%%%%%%%%%%%%%%%%
%\bibliography{MDP}

\begin{thebibliography}{10}

\bibitem{aliprantis06}
C.D. Aliprantis and K.C. Border.
\newblock {\em Infinite dimensional analysis}.
\newblock Springer, Berlin, 2006.

\bibitem{altman96}
E.~Altman.
\newblock Constrained {M}arkov decision processes with total cost criteria:
  occupation measures and primal {LP}.
\newblock {\em Math. Methods Oper. Res.}, 43(1):45--72, 1996.

\bibitem{altman99}
E.~Altman.
\newblock {\em Constrained {M}arkov decision processes}.
\newblock Stochastic Modeling. Chapman \& Hall/CRC, Boca Raton, FL, 1999.

\bibitem{balder95}
E.J. Balder.
\newblock Lectures on {Y}oung measures.
\newblock Cahier math\'ematiques de la d\'ecision 9517, \mbox{CEREMADE},
  Universit\'e Paris Dauphine, 1995.
\newblock \newline Available at:
  \mbox{\url{https://webspace.science.uu.nl/~balde101/lyoungm.pdf}}.

\bibitem{dubins62}
L.E. Dubins.
\newblock On extreme points of convex sets.
\newblock {\em J. Math. Anal. Appl.}, 5:237--244, 1962.

\bibitem{fra-tom2024}
F.~Dufour and T.~Prieto-Rumeau.
\newblock Absorbing {M}arkov decision processes.
\newblock {\em ESAIM Control Optim. Calc. Var.}, 30:Paper No. 5, 18, 2024.

\bibitem{fra-tom2025}
F.~Dufour and T.~Prieto-Rumeau.
\newblock Absorbing {M}arkov decision processes and their occupation measures.
\newblock {\em SIAM J. Control Optim.}, 63(1):676--698, 2025.

\bibitem{fra-tom-arxiv2025}
F.~Dufour and T.~Prieto-Rumeau.
\newblock On the \mbox{Feinberg-Piunovskiy} theorem and its extension to
  chattering policies, 2025.
\newblock arXiv paper. Available at: \url{https://arxiv.org/abs/2510.04808}.

\bibitem{feinberg96}
E.~Feinberg.
\newblock On measurability and representation of strategic measures in {M}arkov
  decision processes.
\newblock In {\em Statistics, probability and game theory}, volume~30 of {\em
  IMS Lecture Notes Monogr. Ser.}, pages 29--43. Inst. Math. Statist., Hayward,
  CA, 1996.

\bibitem{feinberg2020}
E.A. Feinberg, A.~Ja\'{s}kiewicz, and A.~S. Nowak.
\newblock Constrained discounted {M}arkov decision processes with {B}orel state
  spaces.
\newblock {\em Automatica J. IFAC}, 111:108582, 11, 2020.

\bibitem{piunovskiy19}
E.A. Feinberg and A.~Piunovskiy.
\newblock Sufficiency of deterministic policies for atomless discounted and
  uniformly absorbing {MDP}s with multiple criteria.
\newblock {\em SIAM J. Control Optim.}, 57(1):163--191, 2019.

\bibitem{feinberg12}
E.A. Feinberg and U.G. Rothblum.
\newblock Splitting randomized stationary policies in total-reward {M}arkov
  decision processes.
\newblock {\em Math. Oper. Res.}, 37(1):129--153, 2012.

\bibitem{klee63}
V.~Klee.
\newblock On a theorem of {D}ubins.
\newblock {\em J. Math. Anal. Appl.}, 7:425--427, 1963.

\bibitem{Piunovskiy25-book}
A.~Piunovskiy.
\newblock {\em Counterexamples in {M}arkov decision processes}, volume~6 of
  {\em Series on Optimization and its Applications}.
\newblock World Scientific Publishing Co., Hackensack, NJ, 2025.

\bibitem{piunovskiy24}
A.~Piunovskiy and Y.~Zhang.
\newblock Extreme occupation measures in {M}arkov decision processes with an
  absorbing state.
\newblock {\em SIAM J. Control Optim.}, 62(1):65--90, 2024.

\bibitem{piunovskiy24b}
A.~Piunovskiy and Y.~Zhang.
\newblock On the continuity of the projection mapping from strategic measures
  to occupation measures in absorbing {M}arkov decision processes.
\newblock {\em Appl. Math. Optim.}, 89(3):Paper No. 58, 25, 2024.

\bibitem{piunovskiy97}
A.~B. Piunovskiy.
\newblock {\em Optimal control of random sequences in problems with
  constraints}, volume 410 of {\em Mathematics and its Applications}.
\newblock Kluwer Academic Publishers, Dordrecht, 1997.
\newblock With a preface by V. B. Kolmanovskii and A. N. Shiryaev.

\bibitem{weis25}
S.~Weis.
\newblock A note on faces of convex sets.
\newblock {\em J. Convex Anal.}, 32(3):901--919, 2025.

\bibitem{weis21}
S.~Weis and M.~Shirokov.
\newblock The face generated by a point, generalized affine constraints, and
  quantum theory.
\newblock {\em J. Convex Anal.}, 28(3):847--870, 2021.

\bibitem{yi24}
Y.~Zhang and X.~Zheng.
\newblock Further remarks on absorbing {M}arkov decision processes.
\newblock {\em Oper. Res. Lett.}, 57:Paper No. 107191, 7, 2024.

\end{thebibliography}
%%%%%%%%%%%%%%%%%%%%%%%%%%%%%%%%%%%%%%%%%%%%%%%%%%%%%%%%%%%%%%%%%

\end{document}